\newtheorem{theorem}{Theorem}[section]
\newtheorem{proposition}[theorem]{Proposition}
\newtheorem{lemma}[theorem]{Lemma}
\newtheorem{definition}[theorem]{Definition}
\newtheorem{fact}[theorem]{Fact}
\newtheorem{corollary}[theorem]{Corollary}
\newtheorem{example}[theorem]{Example}
\newtheorem*{theorem*}{Theorem}
\begin{document}

\everymath{\displaystyle}

\title{Counting in Uncountably Categorical Pseudofinite Structures}
\author{Alexander Van Abel}
\affil{Wesleyan University}
\date{}

\maketitle

\begin{abstract}
We show that every definable subset of an uncountably categorical pseudofinite structure has pseudofinite cardinality which is polynomial (over the rationals) in the size of any strongly minimal subset, with the degree of the polynomial equal to the Morley rank of the subset. From this fact, we show that classes of finite structures whose ultraproducts all satisfy the same uncountably categorical theory are polynomial $R$-mecs as well as $N$-dimensional asymptotic classes, where $N$ is the Morley rank of the theory.
\end{abstract}

This article studies nonstandard cardinalities of definable sets in uncountably categorical pseudofinite structures. An $L$-structure $M$ is \emph{pseudofinite} if for every $L$-sentence $\varphi$, if $M \models \varphi$ then there is a finite $L$-structure $M_0$ such that $M_0 \models \varphi$. If $M$ satisfies this definition with the additional stipulation that $M_0$ is a substructure of $M$, then $M$ has the \emph{finite model property}.

Equivalently, a pseudofinite structure is one which is elementarily equivalent to an ultraproduct of finite structures. An ultraproduct of finite structures carries with it a notion of cardinality, which takes values in an ultrapower of the reals -- roughly, the pseudofinite cardinality of a definable set in an ultraproduct is the ultraproduct of the cardinalities of the ``slices'' of the definable set in the various finite structures. We make this definition precise in Section 2.

In Pillay's 2014 note ``Strongly minimal pseudofinite structures'' \cite{pillay}, he proves the following result:

\begin{fact}[\cite{pillay}, Theorem 1.1]
\label{pillay}
Let $D$ be a saturated pseudofinite strongly minimal structure, and let $q \in \mathbb{N}^\star$ be the pseudofinite cardinality of $D$ (written $q = |D|$). Then
\begin{enumerate}
\item for any definable (with parameters) set $X \subseteq D^n$, there is a polynomial $P_X(x)$ with integer coefficients and positive leading coefficient such that $|X| = P_X(q)$. Moreover $RM(X) = degree(P_X)$.

\item In fact, for any $L$-formula $\varphi(\bar{x},\bar{y})$, there are a finite number $P_1,\ldots,P_k$ of polynomials over $\mathbb{Z}$, and formulas $\psi_1(\bar{y}), \ldots, \psi_k(\bar{y})$, such that the $\psi_i(\bar{y})$ partition $\bar{y}$-space, and moreover for any $\bar{b}$, $|\varphi(\bar{x},\bar{b})(D)| = P_i(q)$ iff $\models \varphi_i(\bar{b})$.
\end{enumerate}
\end{fact}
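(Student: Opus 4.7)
I would prove parts (1) and (2) together by induction on the arity $n$, since the uniform version (part 2) implies the pointwise version (part 1) and is what the induction actually needs. The whole argument rests on two facts from strong minimality: a uniform bound on the size of ``small sides'' of instances $\phi(D,\bar{b})$, and the definability in $L$ of the counting quantifier ``$\exists^{=k}$.''

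For the base case $n = 1$, strong minimality combined with a compactness argument gives, for any $L$-formula $\phi(x, \bar{y})$, a uniform integer $N$ such that every instance $\phi(D, \bar{b})$ is either finite of size at most $N$ or cofinite in $D$ with complement of size at most $N$. Since ``has exactly $k$ elements'' is expressible as an $L$-formula via counting quantifiers, we obtain $L$-definable subsets $\psi_k^{\mathrm{fin}}(\bar{y})$ and $\psi_k^{\mathrm{cof}}(\bar{y})$ for $0 \leq k \leq N$ partitioning $\bar{y}$-space. The corresponding polynomials are $P_k^{\mathrm{fin}}(x) = k$ (degree $0$, Morley rank $0$) and $P_k^{\mathrm{cof}}(x) = x - k$ (degree $1$, Morley rank $1$); both have integer coefficients and positive leading coefficient.

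For the inductive step, given $\phi(\bar{x}, z, \bar{y})$ with $|\bar{x}| = n$, I would first apply the base case to $\phi$ viewed as a formula in the single variable $z$ with parameters $(\bar{x}, \bar{y})$. This yields $L$-formulas $\chi_j(\bar{x}, \bar{y})$ partitioning $(\bar{x}, \bar{y})$-space together with polynomials $Q_j$ so that whenever $\models \chi_j(\bar{a}, \bar{b})$, the fiber $\phi(\bar{a}, D, \bar{b})$ has size $Q_j(q)$. Applying the inductive hypothesis to each $\chi_j$ (now viewed as a formula in $\bar{x}$ with parameters $\bar{y}$) yields an $L$-partition of $\bar{y}$-space into pieces $\psi_{j,\ell}(\bar{y})$, with polynomials $R_{j,\ell}$ satisfying $|\chi_j(D^n, \bar{b})| = R_{j,\ell}(q)$ on the appropriate piece. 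Passing to a common refinement of these partitions over $j$ and summing fiberwise gives $|\phi(D^{n+1}, \bar{b})| = \sum_j Q_j(q)\,R_{j, \ell(\bar{b})}(q)$, a polynomial in $q$ with integer coefficients, on each piece of the refined partition.

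Finally, I would verify that the degree equals $RM(\phi(D^{n+1}, \bar{b}))$ using additivity of Morley rank in strongly minimal theories: on the locus where $\chi_j$ holds, the corresponding chunk of $\phi(D^{n+1}, \bar{b})$ has Morley rank $\deg Q_j + RM(\chi_j(D^n, \bar{b}))$, which equals $\deg(Q_j R_{j,\ell})$ by the inductive hypothesis; the overall rank is the maximum over $j$ of these, matching the degree of the sum. Positivity of the leading coefficient is preserved by products (by induction) and by sums, since all leading coefficients in play are positive and cannot cancel. I expect the main subtlety to be verifying at each stage that the ``view as parameters'' reformulation really yields an $L$-definable partition, together with tracking the Morley rank additivity --- both of which hinge on the uniform bound $N$ from strong minimality and on definability of rank in strongly minimal theories.
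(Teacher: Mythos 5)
The paper does not give its own proof of this statement: it is quoted as Fact 1 directly from Pillay's note \cite{pillay} and serves as the motivation for the generalization in Theorem \ref{main}. Your argument is, as far as I can tell, correct, and it is essentially the standard argument for the strongly minimal case (and close to what Pillay does). Two small points worth tidying: the degenerate case where $\phi(D^{n+1},\bar b)$ is empty needs separate handling (the zero polynomial has no positive leading coefficient), and the ``max over $j$'' in the rank computation should run only over those $j$ whose chunk is nonempty, matching the fact that the degree of the sum is the max degree over the \emph{nonzero} summands.

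The more interesting comparison is with the paper's own proof of the generalized result, Theorem \ref{main}. Your outer induction on arity --- decompose by the last coordinate, sum over a refined partition of parameter space, use additivity of Morley rank --- reappears almost verbatim as the final step of the paper's proof, which reduces the $n$-variable statement to Proposition \ref{onevarprop}, with the rank bookkeeping handled by Lemma \ref{surj}. What changes completely is the single-variable case. In your setting, where the ambient structure \emph{is} the strongly minimal set $D$, a definable $X \subseteq D$ is simply finite or cofinite (with a uniform compactness bound), so the base case is immediate. In the general uncountably categorical setting, $X$ is a definable subset of $M$ that need not live inside any power of the strongly minimal set $D$, and the one-variable case is where all the work is: the paper replaces ``$X$ is finite or cofinite in $D$'' with Zil'ber's stratification (Fact \ref{stratfact}), writing $X = \bigcup_{d\in D}\psi(M,d,\bar c)$ with lower-rank fibers, and then runs a second induction on Morley rank, with Lemma \ref{counting} supplying the counting identity. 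A concrete symptom of this gap is the coefficient ring: your polynomials are in $\mathbb{Z}[X]$, while the paper's are in $\mathbb{Q}[X]$, and the division appearing in Lemma \ref{counting}.2 together with Example \ref{rationalex} shows rational coefficients are genuinely forced once the set being counted is not a subset of a power of $D$.
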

 
At the end of his paper, Pillay makes this remark: ``It is also natural to ask what is the appropriate level of generality of the precise counting result in Theorem 1.1. Firstly there should be no problem obtaining a similar result for pseudofinite $\aleph_1$-categorical theories, where again any definable set will have cardinality an integral polynomial in $q$ where $q$ is the cardinality of a given strongly minimal set.''

In this paper, we give a proof of Pillay's suggested result, with the minor alteration that the polynomial has rational coefficients rather than just integers, in Theorem \ref{main}:

\begin{theorem*}
	Let $T$ be an uncountably categorical theory in the language $L$. Let $\theta(v,\bar{w})$ be an $L$-formula. Then for every $L$-formula $\varphi(x_1,\ldots,x_n,\bar{y})$, there are finitely many polynomials $F_1(X),\ldots,F_r(X) \in \mathbb{Q}[X]$ and $L$-formulas $\pi_1(\bar{y},\bar{w}),\\ \ldots,\pi_r(\bar{y},\bar{w})$ such that for all pseudofinite ultraproducts $M$ and all $\bar{d} \in M$ such that $D = \theta(M,\bar{d})$ is strongly minimal, we have that for all $\bar{b} \in M^{|\bar{y}|}$, the pseudofinite cardinality $|\varphi(M^n,\bar{b})|$ is $F_i(|D|)$ for some $i$, and furthermore for each $i$ the set \[\{\bar{b} \in M :  |\varphi(M^n,\bar{b})| = F_i(|D|)\}\] is definable over $X$ by $\pi_{\varphi, i}(\bar{y},\bar{d})$. 

	Additionally, if $\bar{b}$ satisfies $\pi_{\varphi, i}(\bar{y},\bar{d})$ , then the degree of the polynomial $F_i(X)$ is the Morley rank of the set $\varphi(M^n,\bar{b})$.
\end{theorem*}

We show that the stipulation that the coordinates are rational rather than integers is necessary in Example \ref{rationalex}. We observe that by letting $D$ be any strongly minimal subset of $M$, we obtain a direct analogue of Fact \ref{pillay}; we express our theorem in the stronger but more cumbersome form above in order to apply the theorem to the sequences of finite structures $(M_\lambda : \lambda \in \Lambda)$ for which ultraproducts $\prod_{\lambda \to \mathcal{U}} M_\lambda$ satisfy $T$.

Here we briefly explain our motivations for finding and proving this result. In the paper \cite{gms}, the authors demonstrate a number of results of the flavor that conditions on pseudofinite dimension in a pseudofinite ultraproduct (pseudofinite dimension is information derived from the pseudofinite cardinalities of the definable subsets of a structure) imply stability-theoretic properties of the stucture, such as simplicity and supersimplicity. In one such result, the authors show that if the pseudofinite dimension satisfies a property they refer to as ``strong attainability'' or ``(SA)'' for short, then the structure has a supersimple theory. The authors of that paper demonstrate that this result does not reverse, by providing an example of a pseudofinite ultraproduct with supersimple theory which does not satisfy (SA), although there is an elementarily equivalent pseudofinite ultraproduct which does. 

In \cite{vanabel}, we give an example of a supersimple pseudofinite theory $T$ such that no pseudofinite ultraproduct satisfying $T$ has the property (SA). As a general project, we are interested in finding converses to the the results in \cite{gms}, by which we mean finding conditions on $T$ which imply that conditions such as (SA) must hold, either in some pseudofinite ultraproduct or all pseudofinite ultraproducts satisfying $T$. In this paper, we obtain one such condition on $T$ -- uncountable categoricity -- as the conclusion of our Main Theorem implies the condition (SA).

 In addition to that paper, this work also connects with the notions of multidimensional exact class, as developed in \cite{wolf}, and $N$-dimensional asymptotic class as developed in \cite{elwes}. We prove that uncountably categorical pseudofinite theories give rise to both types of classes in Propositions \ref{rmec} and \ref{ndim}, summarized in the following theorem:
 
 \begin{theorem*}
 	Let $T$ be an uncountably categorical theory, and let $(M_\lambda : \lambda \in \Lambda)$ be a class of finite structures such that $M := \prod_\mathcal{U} M_\lambda \models T$ for any ultrafilter $\mathcal{U}$ on $\Lambda$. Then $(M_\lambda : \lambda \in \Lambda)$ is both a multidimensional exact class and an $N$-dimensional asymptotic class, where $N$ is the Morley rank of $M$.
 \end{theorem*}
 
The author thanks Alice Medvedev, Alf Dolich, Charlie Steinhorn, Alex Kruckman and Artem Chernikov for their conversations and suggestions for this paper.

\section{Notation}
Throughout this paper, $L$ denotes an arbitrary first-order language.

For a tuple $\bar{x}$, the expression $|\bar{x}|$ denotes the length of the tuple.

For an $L$-formula $\varphi(\bar{x},\bar{y})$, an $L$-structure $M$, a subset $X$ of $M^{|\bar{x}|}$ and a tuple $\bar{b} \in M$, the expression $\varphi(X,\bar{b})$ denotes the set $\{\bar{a} \in M^{|\bar{x}|} : \bar{a} \in X$ and $M \models \varphi(\bar{a},\bar{b})\}$.

Formulas $\varphi_1(\bar{v}),\ldots,\varphi_n(\bar{v})$ are said to \emph{partition $M^{|\bar{v}|}$} when the non-empty sets $\varphi_i(M^{|\bar{v}|})$ form a partition of $M^{|\bar{v}|}$ (we allow some of sets the $\varphi_i(M^{|\bar{v}|})$ to be empty).

The word ``rank'' in this paper will always refer to Morley rank. We will denote the Morley rank of a definable set $X$ by $MR(X)$.

\section{Pseudofinite Cardinality}

\begin{definition}
A theory $T$ with infinite models is \emph{pseudofinite} if every sentence $\sigma$ implied by $T$ has a finite model $M$ such that $M \models \sigma$.

Equivalently, $T$ is pseudofinite if there is some sequence of finite $L$-structures $(M_\lambda : \lambda \in \Lambda)$ and some ultrafilter $\mathcal{U}$ on $\Lambda$ such that the ultraproduct $\prod_{\lambda \to \mathcal{U}} M_i$ is a model of $T$.
\end{definition}

In this paper, the term ``pseudofinite ultraproduct'' will denote a model of the form $\prod_{\lambda \to \mathcal{U}} M_i$ for some sequence of finite $L$-structures $(M_\lambda : \lambda \in \Lambda)$ and some ultrafilter $\mathcal{U}$ on $\Lambda$

One nice property of ultraproducts of finite structures is that they come with a notion of subset ``cardinality''. Let $X$ be a definable subset of an ultraproduct $M = \prod_{\lambda \to \mathcal{U}} M_\lambda$ of finite structures. Then $X$ is the ultraproduct $\prod_{\lambda \to \mathcal{U}} X_\lambda$, where $X_\lambda \subseteq M_\lambda$ is defined by the same formula. Each $X_\lambda$ has a cardinality, being a finite set, and so we can say that $X$ has cardinality $(|X_\lambda|)_{\lambda \to \mathcal{U}}$, an element of the ultrapower $\mathbb{R}^\star = \prod_{\lambda \to \mathcal{U}} \mathbb{R}$. If $X$ is finite then $|X|$ will agree with the counting cardinality of $X$; if $X$ is infinite then $|X|$ will be an infinite hyperreal in $\mathbb{R}^\star$.

We can formalize this notion via the following construction, where we pass from our original language $L$ to a two-sorted expansion, where cardinality takes values in the second sort. We use the same formalism as in Section 2 of \cite{gms}.

\begin{definition}
\label{lplus}
Let $L$ be a first-order language. We define the expansion $L^+$ to be a two-sorted language. The home sort $\mathbf{H}$ has the language $L$. The second sort $\mathbf{OF}$ is an ordered field language $(0,1,-,+,\cdot,<)$. $L^+$ also has, for every $L$-formula $\varphi(\bar{x},\bar{y})$, a function symbol $f_{\varphi(\bar{x},\bar{y})} : \mathbf{H}^{|\bar{y}|} \to \mathbf{OF}$.
\end{definition}

\begin{definition}
Let $(M_\lambda : \lambda \in \Lambda)$ be a sequence of finite $L$-structures and let $\mathcal{U}$ be an ultrafilter on $\Lambda$. Let $M$ be the ultraproduct $\prod_{\lambda \to \mathcal{U}} M_\lambda$. We define the $L^+$-expansion $M^+$ by expanding each $M_\lambda$ to a $L^+$-structure $M_\lambda^+$. In $M_\lambda^+$, we define the ordered field sort $\mathbf{OF}$ to be the ordered field of real numbers $\mathbb{R}$. For an $L$-formula $\varphi(\bar{x},\bar{y})$ we define $f_{\varphi(\bar{x},\bar{y})}$ by $f_{\varphi(\bar{x},\bar{y})}(\bar{b}) = |\varphi(M_\lambda,\bar{b})|$, the cardinality of the set $\varphi(M_\lambda,\bar{b})$, for $\bar{b} \in M_\lambda^{|\bar{y}|}$.

Having defined each $M_\lambda^+$, we let $M^+$ be the ultraproduct of the sequence of $L^+$-structures $(M_\lambda^+ : \lambda \in \Lambda)$ with respect to $\mathcal{U}$.

For an $L$-formula $\varphi(\bar{x},\bar{y})$, we define the \emph{pseudofinite cardinality} of $\varphi(M,\bar{b})$ to be the hyperreal $f_{\varphi(\bar{x},\bar{y})}(\bar{b}) \in \mathbb{R}^\star$ in the $L^+$-expansion $M^+$. We denote this hyperreal as $|\varphi(M,\bar{b})|$.
\end{definition}

One benefit of this construction (not used in this paper, but used in e.g. \cite{gms}) is that the ultraproduct $M^+$ is $\omega_1$-saturated not just in $L$ but in the full language $L^+$. Later on in this paper we show that if $M$ is an uncountably categorical structure, then the two-sorted structure $M^+$ is no more complex than the multi-sorted disjoint union structure where one sort is $M$, the other sort is an ultrapower of the reals $\mathbb{R}^\star$, with no model-theoretic interaction between the two (Proposition \ref{disjointunion}).

We remark that the pseudofinite cardinality $|X|$ of a pseudofinite ultraproduct $M$ depends not just on the $L$-structure of $M$, but on the sequence of finite $L$-structures $(M_\lambda : \lambda \in \Lambda)$ and ultrafilter $\mathcal{U}$ on $\Lambda$ such that $M = \prod_{\lambda \to \mathcal{U}} M_\lambda$.

The following lemma is the combinatorial core of our main theorem. This lemma lets us express the cardinality of $X$ as a rational expression in terms of the cardinalities of subsets of a set $Y$ and cardinalities of the fibers of a definable relation $R(x,y)$ between $X$ and $Y$, so long as there are finitely many such cardinalities.

\begin{lemma}
\label{counting}

Let $\varphi(\bar{x},\bar{y},\bar{z})$ be a formula. Let $\bar{c} \in M^{|\bar{z}|}$ be parameters. Suppose that there are finitely many hyperreals $A^\star_1,\ldots,A^\star_n$ such that for all $\bar{b} \in M^{|\bar{y}|}$ there is an $i$ such that $\varphi(M^{|\bar{x}|},\bar{b},\bar{c}) = A^\star_i$. For each $i$, let $Z_i = \{\bar{b} \in M^{|\bar{y}|} : |\varphi(M^{|\bar{x}|},\bar{b},\bar{c})| = A^\star_i\}$ (note that $Z_i$ is definable in $L^+$).

\begin{enumerate}

\item The equation \[|\varphi(M^{|\bar{x}| + |\bar{y}|},\bar{c})| = \sum_{i=1}^n A^\star_i \cdot |Z_i|\] holds in $M^+$.

\item Suppose in addition that there is a single hyperreal $B^\star$ such that for all $\bar{a} \in M^{|\bar{x}|}$, if $\varphi(\bar{a},M^{|\bar{y}|}, \bar{c}) \neq \emptyset$ then $|\varphi(\bar{a},M^{|\bar{y}|}, \bar{c})| = B^\star$. Then the equation \[|\{\bar{a} \in M^{|\bar{x}|} : M \models \exists \bar{y} \varphi(\bar{a},\bar{y},\bar{c})\}| = \frac{ \sum_{i=1}^n A^\star_i \cdot |Z_i|}{B^\star}\] holds in $M^+$.

\end{enumerate}
\end{lemma}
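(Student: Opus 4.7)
My plan is to prove both parts by reducing to elementary counting identities in each finite factor $M_\lambda$ and then taking ultraproducts in $\mathbb{R}^\star$. First I fix representatives $\bar{c} = (\bar{c}_\lambda)_{\lambda \in \Lambda}$, $A_i^\star = (A_{i,\lambda})_{\lambda \in \Lambda}$, and (for part 2) $B^\star = (B_\lambda)_{\lambda \in \Lambda}$. Without loss of generality the hyperreals $A_1^\star,\ldots,A_n^\star$ are pairwise distinct (delete repeats), and then on a $\mathcal{U}$-large set the reals $A_{1,\lambda},\ldots,A_{n,\lambda}$ are also pairwise distinct. Set $Z_{i,\lambda} := \{\bar{b} \in M_\lambda^{|\bar{y}|} : |\varphi(M_\lambda^{|\bar{x}|},\bar{b},\bar{c}_\lambda)| = A_{i,\lambda}\}$; the ultraproduct of these is the $L^+$-definable set $Z_i$ from the statement, and $|Z_i|$ in $M^+$ is by construction the hyperreal $(|Z_{i,\lambda}|)_\lambda$.

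The combinatorial heart of part 1 is the claim that $\mathcal{U}$-a.e.\ the sets $Z_{1,\lambda},\ldots,Z_{n,\lambda}$ actually partition all of $M_\lambda^{|\bar{y}|}$. This is the step I expect to be the main obstacle, because it requires transferring an ultraproduct-level statement back to almost-everywhere behavior in the factors. If the claim fails, then on a $\mathcal{U}$-large set $T$ I can pick $\bar{b}_\lambda \in M_\lambda^{|\bar{y}|}$ whose fiber cardinality differs from every $A_{i,\lambda}$; assembling $\bar{b} = (\bar{b}_\lambda)_\lambda$ gives a tuple in $M^{|\bar{y}|}$ whose fiber cardinality in $M^+$ is simultaneously equal to some $A_j^\star$ (by hypothesis) and unequal to every $A_j^\star$ (by construction of $\bar{b}_\lambda$ on $T$), a contradiction. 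This is precisely where the finiteness of the list $A_1^\star,\ldots,A_n^\star$ is used. Once the partition is established, the identity $|\varphi(M_\lambda^{|\bar{x}|+|\bar{y}|},\bar{c}_\lambda)| = \sum_{\bar{b} \in M_\lambda^{|\bar{y}|}} |\varphi(M_\lambda^{|\bar{x}|},\bar{b},\bar{c}_\lambda)| = \sum_{i=1}^n A_{i,\lambda} \cdot |Z_{i,\lambda}|$ is immediate double-counting in a finite set, and taking the ultraproduct in $\mathbb{R}^\star$ yields part 1.

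For part 2, I apply the same style of argument, now in the $\bar{x}$-direction, to conclude that $\mathcal{U}$-a.e.\ every nonempty $\bar{x}$-fiber of $\varphi(M_\lambda^{|\bar{x}|+|\bar{y}|},\bar{c}_\lambda)$ has cardinality exactly $B_\lambda$: otherwise I can diagonalize a bad $\bar{a}_\lambda$ into an $\bar{a}\in M^{|\bar{x}|}$ witnessing a fiber size different from $B^\star$. The projection onto the $\bar{x}$-coordinates is therefore $B_\lambda$-to-$1$ onto $X_\lambda := \{\bar{a} : M_\lambda \models \exists \bar{y}\,\varphi(\bar{a},\bar{y},\bar{c}_\lambda)\}$, giving $|\varphi(M_\lambda^{|\bar{x}|+|\bar{y}|},\bar{c}_\lambda)| = B_\lambda \cdot |X_\lambda|$ on a $\mathcal{U}$-large set. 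Combining with part 1 in $\mathbb{R}^\star$ and dividing by $B^\star$ (which is positive whenever the image is nonempty, so the division is legitimate) produces the equation in part 2.
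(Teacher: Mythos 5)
Your proof is correct and follows essentially the same route as the paper's: fix coordinate representatives, verify the double-counting identity $\mathcal{U}$-almost-everywhere in the finite factors, and take ultraproducts. You supply details the paper leaves as ``easily seen'' (notably the diagonalization argument showing the $Z_{i,\lambda}$ partition $M_\lambda^{|\bar y|}$ $\mathcal{U}$-a.e.), and for part 2 you re-run the finite counting directly rather than, as the paper does, invoking part 1 with the roles of $\bar{x}$ and $\bar{y}$ swapped and $n=1$, $A^\star_1 = B^\star$; these are cosmetic differences in the same argument.
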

\begin{proof}
The equation in Statement 1 is easily seen to hold in the finite case. Each $A^\star_i \cdot |Z_i|$ is the cardinality of the set $\{(\bar{a},\bar{b}) \in M^{|\bar{x}| + |\bar{y}|} : \bar{b} \in Z_i$ and $M \models \varphi(\bar{a},\bar{b})$\}.

To prove Statement 2, note that under our special assumption, Statement 1, with the roles of $\bar{x}$ and $\bar{y}$ switched, tells us that $|\varphi(M^{|\bar{x}| + |\bar{y}|},\bar{c})| = B^\star \cdot |\{\bar{a} \in M^{|\bar{x}|} : |\varphi(\bar{a},M^{|\bar{y}|}, \bar{c}) = B^\star\}| = B^\star \cdot |\{\bar{a} \in M^{|\bar{x}|} :  M \models \exists \bar{y} \varphi(\bar{a},\bar{y},\bar{c})\}|$. Therefore $\sum_{i=1}^n A^\star_i \cdot |Z_i| =  B^\star \cdot |\{\bar{a} \in M^{|\bar{x}|} L  M \models \exists \bar{y} \varphi(\bar{a},\bar{y},\bar{c})\}|$. Dividing both sides by $B^\star$ gives the desired result.
\end{proof}

We also make use of the following easily verified facts.

\begin{lemma}
\label{polynomeq}
Let $\mathbb{R}^\star = \prod_{\lambda \to \mathcal{U}} \mathbb{R}$ be an ultrapower of the real field $\mathbb{R}$.
\begin{enumerate}
\item Suppose $F(X), G(X) \in \mathbb{Q}[X]$ are polynomials, and suppose $A^\star \in \mathbb{R}^\star$ is an infinite positive element (i.e. $A^\star > n$ for every standard natural number $n$) such that $F(A^\star) = G(A^\star)$. Then $F(x) = G(x)$ for all $x \in \mathbb{R}$, i.e. $F$ and $G$ are the same polynomial.
\item Suppose $F(X) \in \mathbb{Q}[X]$ is a polynomial and suppose $A^\star, B^\star \in \mathbb{R}^\star$ are infinite positive elements such that $F(A^\star) = F(B^\star)$. Then $A^\star = B^\star$. 
\item Suppose $F(X) \in \mathbb{Q}[X]$ is a polynomial, and suppose $A^\star \in \mathbb{R}^\star$ is an infinite positive element such that $F(A^\star)$ is positive. Then the leading coefficient of $F(X)$ is positive.
\end{enumerate}
\end{lemma}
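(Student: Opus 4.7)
The plan for all three parts is to invoke the transfer principle (\L o\'s's theorem for the ultrapower $\mathbb{R}^\star$), reducing each claim to an elementary fact about real polynomials, together with the observation that an infinite positive element of $\mathbb{R}^\star$ strictly exceeds every standard real $N$.

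For Statement 1, I would set $H = F - G \in \mathbb{Q}[X]$ and argue by contradiction. If $H$ is not the zero polynomial, it has only finitely many real roots, so there is some $N \in \mathbb{N}$ with $H(x) \neq 0$ for all real $x > N$. Transferring this first-order statement to $\mathbb{R}^\star$ and applying it to the infinite element $A^\star > N$ yields $H(A^\star) \neq 0$, contradicting the hypothesis $F(A^\star) = G(A^\star)$; hence $H$ is identically zero. Statement 3 is handled in the same style: writing $F(X) = a_d X^d + \cdots + a_0$ with $a_d \neq 0$, one has $F(x)/x^d \to a_d$ as real $x \to \infty$, so beyond some $N$ the value $F(x)$ has the same sign as $a_d$. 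Transferring and evaluating at $A^\star$ forces $a_d > 0$ (the degenerate case $d = 0$ is immediate, since then $F(A^\star) = a_0$).

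For Statement 2, I would (implicitly assuming $F$ is non-constant, without which the conclusion fails) use that $F'$ is a nonzero polynomial with only finitely many real roots, hence has constant sign on some interval $(N, \infty)$. Therefore $F$ is strictly monotonic, and in particular injective, on $(N, \infty)$. Transferring this injectivity statement to $\mathbb{R}^\star$ gives that $F$ is injective on $\{x \in \mathbb{R}^\star : x > N\}$, a set that contains both infinite positive elements $A^\star$ and $B^\star$. Hence $F(A^\star) = F(B^\star)$ forces $A^\star = B^\star$. The only mild wrinkle I anticipate is this implicit non-constancy hypothesis in Statement 2; beyond that, all three claims follow immediately from standard behavior of polynomials at infinity once transfer is invoked, so I expect no substantive obstacle.
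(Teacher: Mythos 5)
Your proposal is correct and follows essentially the same transfer/\L o\'s-based approach as the paper for all three parts: reduce each claim to a first-order statement about the eventual behavior of real polynomials on a ray $(N,\infty)$, and apply it at the infinite element $A^\star$ (and $B^\star$). Your observation that Statement 2 implicitly requires $F$ to be non-constant is accurate --- the paper's proof tacitly makes the same assumption when it invokes eventual strict monotonicity --- but this is a minor omission in the lemma's statement, not a defect in either argument.
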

\begin{proof}
\begin{enumerate}
\item Express $A^\star$ as an ultralimit $(a_\lambda)_{\lambda \to \mathcal{U}}$, with each $a_\lambda \in \mathbb{R}$. Then $F(A^\star) = (F(a_\lambda))_{\lambda \to \mathcal{U}}$ and $G(A^\star) = (G(a_\lambda))_{\lambda \to \mathcal{U}}$. Since these two ultralimits are equal, we have $F(a_\lambda) = G(a_\lambda)$ for almost all $\lambda \in \Lambda$. Since $A^\star$ is infinite, for every standard $n \in \omega$ we have $a_\lambda > n$ for almost all $\lambda$. It follows that for every $n$ there is a real $a_\lambda > n$ such that $F(a_\lambda) = G(a_\lambda)$. Therefore the polynomial $F(x) - G(x)$ has arbitrarily large zeroes in $\mathbb{R}$. Hence it must be constant zero, and so $F(x) = G(x)$ for all $x$.
\item In $\mathbb{R}$, there is an $n$ such that $F(x)$ is strictly increasing or decreasing on $(n,\infty)$ (in $\mathbb{R})$). In particular, the function $x \mapsto F(x)$ is injective on this open ray. This is first-order expressible, hence true in $\mathbb{R}^\star$, and since $A^\star > n$ and $B^\star > n$ we get $A^\star = B^\star$.
\item If the leading coefficient of $F(x)$ were negative, then $F(a)$ is negative for sufficiently large real numbers $a$ -- that is, for $a > M$ for some real $M$. Therefore if $A^\star = (a_\lambda)_{\lambda \in \Lambda}$ is positive and infinite, the set $\{\lambda \in \Lambda : a_\lambda > M\}$ is $\mathcal{U}$-large. Then the set $\{\lambda \in \Lambda : F(a_\lambda) < 0\} = \{\lambda \in \Lambda : (F(A^\star))_\lambda\}$ is $\mathcal{U}$-large, whence $F(A^\star)$ is negative. Fact 3 follows contrapositively.
\end{enumerate}
\end{proof}

\section{Uncountably Categorical Theories}
We begin by recalling some basic facts about uncountably categorical theories.

\begin{fact}[\cite{buechler}, Lemma 3.4.10]
\label{satfact}
Let $M$ be an uncountable model of an $\aleph_1$-categorical theory $T$. Then $M$ is saturated.
\end{fact}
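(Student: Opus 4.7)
The plan is to deduce this as a direct consequence of two classical results: Morley's theorem that uncountably categorical countable theories are totally transcendental, and Morley's categoricity theorem, which upgrades $\aleph_1$-categoricity to $\kappa$-categoricity in every uncountable cardinal $\kappa$.

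First I would observe that, by Morley's theorem, $T$ is totally transcendental and hence $\omega$-stable, giving the type-counting bound $|S(A)| \leq |A| + \aleph_0$ for every parameter set $A$. Second, writing $\kappa = |M|$, I would construct a saturated model $N_\kappa \models T$ of cardinality $\kappa$ via a standard elementary chain of length $\kappa$: start with any model of $T$ of size $\kappa$ and at each successor stage realize every complete type over the current structure. The $\omega$-stability bound $|S(M_\alpha)| \leq |M_\alpha| + \aleph_0$ keeps the chain at cardinality $\kappa$, and the union realizes every type over every subset of size strictly less than $\kappa$, so it is saturated. Third, by Morley's categoricity theorem $T$ is $\kappa$-categorical, so $M \cong N_\kappa$, and therefore $M$ is saturated.

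The main obstacle is Morley's categoricity theorem itself, a deep result whose proof requires the machinery of strongly minimal sets, prime models over arbitrary sets, and two-cardinal analysis. Accepting it as a black box (as would be standard for a citation labeled \emph{Fact}), the remainder of the argument is a routine elementary chain construction run against the type-counting bound supplied by $\omega$-stability, so no further obstacle arises.
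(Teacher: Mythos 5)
The paper states this result as an unproved \emph{Fact}, citing it implicitly as standard, so there is no in-paper proof to compare against. Your proposed derivation is the usual route and is essentially correct: Morley's theorem gives total transcendence and hence $\omega$-stability; $\omega$-stability gives saturated models in uncountable cardinalities; Morley's categoricity theorem then forces $M$ to be isomorphic to the saturated model of its own cardinality.

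One technicality your elementary-chain sketch glosses over: building a chain $(M_\alpha : \alpha < \kappa)$ with $M_{\alpha+1}$ realizing all types over $M_\alpha$ yields a saturated union only when $\kappa$ is regular. For singular $\kappa$, a parameter set $A \subseteq \bigcup_\alpha M_\alpha$ of size $<\kappa$ but $\geq \operatorname{cf}(\kappa)$ need not be contained in any single $M_\alpha$, so the argument that every type over $A$ gets realized at a successor stage does not directly apply. The standard repairs are either to use stability machinery (definability/local character of types, or a chain of length $\operatorname{cf}(\kappa)$ of increasingly saturated models of size $\kappa$) or to use the Baldwin--Lachlan analysis specific to $\aleph_1$-categorical theories (uncountable models are prime and minimal over a strongly minimal set with a basis of full cardinality, from which saturation follows). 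This is a routine gap in a proof meant as a citation, and your overall strategy is sound, but it is worth knowing that the regular-cardinal chain argument alone does not cover every uncountable $\kappa$.
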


\begin{definition}
\label{deffinmor}
	Let $T$ be theory. We say that $T$ has \emph{definable and finite Morley Rank} if
	
	\begin{enumerate}
		\item Every definable subset of every model of $T$ has finite Morley rank, and
		
		\item For every formula $\varphi(\bar{x},\bar{y})$ and every $n < \omega$, there is a formula $Mor_{\varphi, n}(\bar{y})$ such that in any $M \models T$, $M \models Mor_{\varphi, n}(\bar{y})$ if and only if $MR(\varphi(M^{|\bar{x}|},\bar{b})) = n$
	\end{enumerate}
\end{definition}

\begin{fact}[\cite{pillaybook}, Chapter 1, Propositions 5.14 and 5.18]
\label{rankfact}
Let $T$ be an uncountably categorical theory. Then $T$ has definable and finite Morley rank.
\end{fact}

We use the following well-known fact about uncountably categorical models (see \cite[Lemma 3.1.12]{buechler}; this fact is also a consequence of Fact \ref{rankfact}, since a definable subset $X$ is finite if and only if $MR(X) = 0$).

\begin{fact}
\label{finitecover}
Suppose $T$ is $\aleph_1$-categorical and $\varphi(\bar{x},\bar{y})$ is a formula. Then there is a natural number $n$ such that in all models $M$ of $T$ and all $\bar{b} \in M^{|\bar{y}|}$, either $\varphi(M^{|\bar{x}|},\bar{b})$ is infinite or $|\varphi(M^{|\bar{x}|}, \bar{b})| \leq n$.
\end{fact}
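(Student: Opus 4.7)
The plan is to combine the definability of Morley rank (Fact~\ref{rankfact}.2) with a compactness argument. The crucial observation is that in an $\aleph_1$-categorical --- hence totally transcendental --- theory, a definable set has Morley rank $0$ precisely when it is finite and nonempty, so finiteness of a parametrically defined family is a first-order property of the parameters. Concretely, I would assemble a single $L$-formula $\psi(\bar{y})$ such that, in every $M \models T$, $\psi(\bar{b})$ holds exactly when $\varphi(M^{|\bar{x}|},\bar{b})$ is a finite (possibly empty) subset of $M^{|\bar{x}|}$: take $\psi(\bar{y})$ to be the disjunction of $\neg\exists\bar{x}\,\varphi(\bar{x},\bar{y})$ with the $\emptyset$-definition, supplied by Fact~\ref{rankfact}.2, of $\{\bar{b} : MR(\varphi(M^{|\bar{x}|},\bar{b})) = 0\}$.

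Next I would argue by contradiction. Suppose the conclusion fails, so that for every $n \in \omega$ there is some $M_n \models T$ and $\bar{b}_n \in M_n^{|\bar{y}|}$ with $\varphi(M_n^{|\bar{x}|},\bar{b}_n)$ finite of size at least $n$. Working in the expansion of $L$ by a fresh constant tuple $\bar{b}$, the set of sentences
\[ T \cup \{\psi(\bar{b})\} \cup \{\exists^{\geq n}\bar{x}\,\varphi(\bar{x},\bar{b}) : n \in \omega\} \]
is finitely satisfiable --- any finite subset is witnessed in some $M_n$ with $n$ large enough --- and so by compactness has a model $M'$ with realisation $\bar{b}' \in (M')^{|\bar{y}|}$. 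But then $\varphi((M')^{|\bar{x}|},\bar{b}')$ is simultaneously finite (by $\psi(\bar{b}')$) and contains at least $n$ actual elements for every standard $n$, hence is genuinely infinite in $M'$, a contradiction.

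The only substantive point to verify is the equivalence ``$\varphi(M^{|\bar{x}|},\bar{b})$ is finite iff its Morley rank is $0$ or the set is empty'': this uses Fact~\ref{rankfact}.1 together with the standard observation that in a totally transcendental theory every infinite definable set has Morley rank at least $1$. No deeper obstacle appears; the argument is essentially a single application of the definability of Morley rank combined with compactness.
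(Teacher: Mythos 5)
The paper offers no proof of this fact: it cites \cite[Lemma 3.1.12]{buechler} and remarks parenthetically that the statement is also a consequence of Fact~\ref{rankfact}.2. Your argument is a correct and complete working-out of that second route, using the $\emptyset$-definability of the rank-zero locus to obtain a uniform $L$-formula equivalent to finiteness of $\varphi(M^{|\bar{x}|},\bar{b})$ and then invoking compactness to extract the uniform bound $n$.
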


\begin{definition}
\label{numt}
Let $T$ be an $\aleph_1$-categorical theory (or more generally any theory of which Fact \ref{finitecover} holds). Let $\varphi(\bar{x},\bar{y})$ be a formula. Define $Num_T(\varphi(\bar{x},\bar{y}))$ to be the least number $N \in \omega$ such that in any model $M$ of $T$ and any $\bar{b} \in M^{|\bar{y}|}$, if $\varphi(M^{|\bar{x}|},\bar{b})$ is finite then $|\varphi(M^{|\bar{x}|},\bar{b})| < N$.
\end{definition}

The following bit of folklore is well-known, although since the author is having a difficult time finding a citation, we provide a proof.

\begin{lemma}
\label{surj}
Let $M$ be an  $L$-structure. Let $A \subseteq M^n$ and $B \subseteq M^m$ be definable subsets such that $MR(A)$ and $MR(B)$ exist and are finite, and let $f : A \to B$ be a definable surjection such that $MR(f^{-1}(\bar{b}))$ exists and is finite for all $\bar{b} \in B$. Let $R, S \in \omega$. Then:
\begin{enumerate}
\item If $MR(f^{-1}(\bar{b})) \geq R$ for all $\bar{b} \in B$ and $MR(B) \geq S$, then $MR(A) \geq R + S$.

\item Suppose $Th(M)$ has finite and definable Morley rank (Definition \ref{deffinmor}) and suppose that $M$ is $\omega_1$-saturated. If $MR(f^{-1}(\bar{b})) \leq R$ for all $\bar{b} \in B$ and $MR(B) = S$ then $MR(A) \leq R + S$.
\end{enumerate}
\end{lemma}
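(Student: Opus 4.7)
Part 1 is handled by a direct induction on $S$, while Part 2 can be reduced, via $\omega_1$-saturation, to the standard additivity inequality for Morley rank.

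For Part 1, induct on $S$. The base case $S=0$ is immediate: any $y \in Y$ yields $f^{-1}(y) \subseteq X$ with $MR(f^{-1}(y)) \ge R$, so $MR(X) \ge R$. For the step, the hypothesis $MR(Y) \ge S+1$ produces an infinite family of pairwise disjoint definable $Y_\alpha \subseteq Y$ with $MR(Y_\alpha) \ge S$. Restricting $f$ to $f^{-1}(Y_\alpha) \to Y_\alpha$ preserves the fiber-rank hypothesis, so by the inductive hypothesis $MR(f^{-1}(Y_\alpha)) \ge R+S$. The sets $f^{-1}(Y_\alpha)$ form an infinite pairwise disjoint definable family in $X$, whence $MR(X) \ge R+S+1$. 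This half uses neither saturation nor definability of rank.

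For Part 2, the plan is to use $\omega_1$-saturation to realize a type of maximum rank in $X$ over the (countable) parameter set defining $X$, $Y$, and $f$: choose $\bar{a} \in X$ with $MR(\mathrm{tp}(\bar{a})) = MR(X)$ (such a type exists because $MR(X)$ is finite, and it is realized by saturation), and set $\bar{b} = f(\bar{a})$. Then $MR(\bar{b}) \le MR(Y) = S$, and since $\bar{a} \in f^{-1}(\bar{b})$ we have $MR(\bar{a}/\bar{b}) \le MR(f^{-1}(\bar{b})) \le R$. Combining the additivity inequality $MR(\bar{a}\bar{b}) \le MR(\bar{a}/\bar{b}) + MR(\bar{b})$ with $MR(\bar{a}\bar{b}) = MR(\bar{a}) = MR(X)$ (because $\bar{b}$ is $\bar{a}$-definable) yields $MR(X) \le R+S$.

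The principal obstacle is justifying the additivity inequality in the stated generality: it is classical when $T$ is totally transcendental (Lascar's equation), and definability of Morley rank (Fact \ref{rankfact}.2) provides precisely the uniform semicontinuity of fiber rank in parameters needed to push it through here. A more elementary but longer alternative is to induct on $S$ in the style of Part 1: first use Fact \ref{rankfact}.2 to partition $Y$ into definable pieces $Y_k = \{y : MR(f^{-1}(y)) = k\}$ and reduce to the case of constant fiber rank $k$, and then in the inductive step derive a contradiction from an infinite pairwise disjoint family $X_i \subseteq X$ of rank $\ge k+S$ by exploiting that finite Morley rank of $Y$ yields finite Morley degree, together with $\omega_1$-saturation, to locate a single fiber $f^{-1}(y^*)$ forced to contain infinitely many pairwise disjoint rank-$k$ pieces and hence to have rank greater than $k$.
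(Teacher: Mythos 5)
Part 1 of your proposal is correct and is essentially the paper's own induction on $S$.

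Part 2 has a genuine gap in both routes you sketch. The primary route appeals to the inequality $MR(\bar a\bar b) \le MR(\bar a/\bar b) + MR(\bar b)$, but if you realize a maximal-rank type $\bar a$ in $X$ and set $\bar b = f(\bar a)$ as you do, that inequality \emph{is} the statement $MR(X) \le \max_y MR(f^{-1}(y)) + MR(Y)$ that the lemma asserts; invoking it here is circular. It is not a theorem of totally transcendental theories in general (for $U$-rank yes, for Morley rank no), and the paper's reason for including this lemma at all is precisely that no clean citation for it was found. The sentence about definability of Morley rank ``providing the uniform semicontinuity needed'' is a promissory note, not an argument --- supplying that argument is the whole content of Part 2. (For the paper's actual application to $\aleph_1$-categorical $T$ one could instead cite $RM = U$ together with the Lascar $U$-rank inequality, but the lemma is stated for an arbitrary $\omega_1$-saturated structure with definable rank, and you do not invoke that identification.)

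Your alternative sketch is much closer to the paper's proof but omits the step that does the real work. After reducing to fibers of constant rank $k$ and supposing, toward a contradiction, pairwise disjoint $X_i \subseteq X$ of rank $\ge k + S$, you want a single $y^*$ such that $f^{-1}(y^*)$ meets infinitely many $X_i$ in a rank-$k$ set. But the \emph{sub}fibers $f^{-1}(y) \cap X_i$ need not have rank $k$ even when every full fiber $f^{-1}(y)$ does. The paper resolves this by passing to $Y_i^+ := \{y \in f(X_i) : MR(f^{-1}(y) \cap X_i) = R\}$ (definable by Fact 3.2.2), proving via a nested induction (dropping the fiber rank to $R-1$ on the complementary part $Y_i^-$) and a reduction to Morley degree one that $Y_i^+$ has full rank $S$ while $Y \setminus Y_i^+$ has rank $< S$, and only then using $\omega_1$-saturation to realize some $y^* \in \bigcap_i Y_i^+$. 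Without the $Y_i^+$ refinement the saturation argument does not produce a $y^*$ over which the subfibers have rank $R$, and the contradiction does not close. Relatedly, the paper inducts on $R + S$ rather than on $S$ alone, precisely because the $Y_i^-$/$X_i^-$ step needs the inductive hypothesis with $R$ decreased.
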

\begin{proof}
	
\begin{enumerate}
	\item We prove this by induction on $S$. At $S = 0$, the statement is trivial. Specifically, let $\bar{b} \in B$ be any element. Since $f^{-1}(\bar{b}) \subseteq A$ we have $MR(A) \geq MR(f^{-1}(\bar{b})) \geq R = R + S$.
	
	Suppose the statement is proven for $s < S$ and suppose $MR(B) \geq S$. Let $B_1, B_2, \ldots$ be disjoint definable subsets of $B$ of rank $S-1$. For $i \in \omega$ let $A_i = f^{-1}(B_i)$. Then $A_1, A_2, \ldots,$ are disjoint definable subsets of $A$. Let $f_i$ be the restriction of $f$ to $A_i$. Then $f_i$ is a surjection of $A_i$ onto $B_i$. For $\bar{b} \in B_i$ we have $f_i^{-1}(\bar{b}) = f^{-1}(\bar{b})$ which has rank $\geq R$ by assumption. Therefore, by the inductive hypothesis, $MR(A_i) \geq R + S - 1$ for each $i$. Hence $MR(A) \geq R + S$.
	
	\item We prove Statement 2 by induction on $R + S$, with base case $S = 0$ (and $R$ equal to anything).
	
	If $S = 0$ then $MR(B) \leq S$ means $B$ is finite. Let $B = \{b_1,\ldots,b_p\}$. Then $A = f^{-1}(b_1) \cup \ldots \cup f^{-1}(b_p)$. If each $f^{-1}(b_i)$ has Morley rank $\leq R$, then as a finite union of such sets, $A$ also has a Morley rank of $\leq R = R + S$.
	
	Assume the statement is proven for all pairs $r,s$ such that $r + s < R + S$. Suppose first that the Morley degree of $B$ is 1, and suppose towards a contradiction that $MR(A) > R + S$. Then we can find $A_1, A_2, A_3, \ldots$ which are disjoint subsets of $A$ of Morley rank $R + S$. For each $i$, let $B_i = f(A_i)$. Then for each $i$, the map $f_i$ which is the restriction of $f$ to $A_i$ is a surjection from $A_i$ onto $B_i$. The fibers of $f_i$ are still of rank $\leq R$, as they are subsets of the fibers of $f$. If $MR(B_i) < S$ then by induction we obtain $MR(A_i) \leq R + MR(B_i) < R + S$, a contradiction. Therefore $MR(B_i) = S$ for each $i$, and since $B$ has degree 1, we obtain $MR(B \setminus B_i) < S$ for each $i$, as well as the fact that the Morley degree of each $B_i$ is 1.
	
	In each $B_i$, let $B_i^+ = \{\bar{b} \in B_i : MR(f_i^{-1}(\bar{b})) = R\}$ and let $B_i^- = \{\bar{b} \in B_i : MR(f_i^{-1}(\bar{b})) < R\} = B_i \setminus B_i^+$. Each $B_i^+$ and $B_i^-$ is definable, by definability of Morley rank in $Th(M)$. Let $A_i^+ = f_i^{-1}(B_i^+)$ and $A_i^- = f_i^{-1}(B_i^-) = A_i \setminus A_i^+$. Then $f_i$ is a surjection of $A_i^+$ onto $B_i^+$ and of $A_i^-$ onto $B_i^-$. If $R = 0$ then $B_i^-$ and $A_i^-$ are empty; if $R > 0$ then the fibers of $f_i$ restricted to $A_i^-$ have Morley rank $\leq R - 1$, so by induction, $MR(A_i^-) \leq R - 1 + S$. In either case, $MR(A_i^+) = MR(A_i) = R + S$ and $MR(A \setminus A_i^+) < R + S$. If $MR(B_i^+) < S$ then as in the previous paragraph, since $f_i$ surjects $A_i^+$ onto $B_i^+$ with fibers of rank $R$, we would have $MR(A_i^+) < R + S$, which is false. Therefore $MR(B_i^+) = S$ for each $i$, and since $B$ has degree 1, we have $MR(B \setminus B_i^+) < S$ for each $i$. Therefore for every $n$, the set $B - (B^+_1 \cap \ldots \cap B^+_n) = (B \setminus B_1^+) \cup \ldots \cup (B \setminus B_n^+)$ has Morley rank $< S$, and so $B^+_1 \cap \ldots \cap B^+_n$ has Morley rank $S$.
	
	Consider the partial type $\pi(\bar{y}) = \{``\bar{y} \in B^+_i" : i \in \omega\}$, which is definable with parameters from the set of parameters which define the sets $A_1, A_2, \ldots$. Since $B^+_1 \cap \ldots \cap B^+_n$ has Morley rank $S$ and is in particular non-empty for every $n$, the type $\pi$ is consistent. Because $M$ is $\omega_1$-saturated, $\pi$ is realized by some $\bar{b} \in B$. Then $MR(f_i^{-1}(\bar{b})) = R$ for all $i \in \omega$. But then $f_1^{-1}(\bar{b}), f_2^{-1}(\bar{b}), \ldots$ are disjoint subsets of $f^{-1}(\bar{b})$ of Morley rank $R$, hence $MR(f^{-1}(\bar{b})) > R$, a contradiction.
	
	This proves that $MR(A) \leq R + S$ when the Morley degree of $B$ is 1. If the degree of $B$ is $D > 1$, then $B$ is the disjoint union of definable sets $B_1, \ldots, B_D \subseteq B$, each of Morley rank $S$ and Morley degree 1. Letting $A_ i = f^{-1}(A_i)$ we get by the $D=1$ case that $MR(A_i) \leq R + S$, and since $A = A_1 \cup \ldots \cup A_i$ this completes the proof.
\end{enumerate}
\end{proof}

Our main tool in proving Theorem \ref{main} is an early result of Zil'ber's found in \cite{zilberpaper}, as well as his book \cite{zilberbook}. His result uses the concept of \emph{stratification}, defined as follows.

\begin{definition}[\cite{zilberbook}, Chapter 1, Section 2, after Fact 2.2]
 Let $A, B$ be definable unary subsets of a totally transcendental model $M$. A \emph{stratification of $A$ with respect to $B$} is a formula $\psi(x,v,\bar{c})$ with parameters $\bar{c}$ such that $A = \bigcup_{b \in B} \psi(M,b,\bar{c})$.
 
A \emph{stratification of rank at most $r$} is a stratification $\psi(x,v,\bar{c})$ such that $MR(\psi(M,b,\bar{c})) \leq r$ for all $b \in B$.

A \emph{proper stratification} is a stratification $\psi(x,y,\bar{c})$ of rank at most $rk(A) - 1$.
\end{definition}

We observe that for formulas $\varphi(x,\bar{y})$, $\theta(v,\bar{y})$ and $\psi(x,v,\bar{z})$, the notion \[\psi(x,v,\bar{c}) \mbox{ is a stratification of } \varphi(M,\bar{b}) \mbox{ with respect to } \theta(v,\bar{d})\] is first-order $\emptyset$-definable in $\bar{b},\bar{c},\bar{d}$. 

If we are working in a theory with definable and finite Morley Rank, then additionally, the notions ``$\psi(x,w,\bar{d})$ is a stratification of rank at most $r$ of $\varphi(M,\bar{b})$ with respect to $\theta(v,\bar{d})$'' and ``$\psi(x,v,\bar{c})$ is a proper stratification of $\varphi(M,\bar{b})$ with respect to $\theta(v,\bar{d})$'' are $\emptyset$-definable in $\bar{b},\bar{c},\bar{d}$.

\begin{definition}
\label{prstrat}
	Suppose $T$ is a theory with definable and finite Morley rank. Let $\varphi(x,\bar{y})$, $\psi(x,v,\bar{z})$, and $\theta(v,\bar{w})$ be $L$-formulas. Let $PrStrat_{T,\psi,\varphi,\theta}(\bar{y},\bar{z},\bar{w})$ be a formula such that in any $M \models T$, we have $M \models PrStrat_{T,\psi,\varphi,\theta}(\bar{b},\bar{c},\bar{d})$ if and only if $\psi(x,v,\bar{c})$ is a proper stratification of $\varphi(M,\bar{b})$ with respect to $\theta(M,\bar{d})$.
\end{definition}

In the sequel we will drop the subscripted $T$, as the theory will be clear from context.

Zil'ber's result tells us that in a $\aleph_1$-categorical theory, any infinite definable subset has a proper stratification over any strongly minimal set.

\begin{fact}[\cite{zilberpaper}, Lemma 1]
\label{stratfact}
Let $T$ be an uncountably categorical theory and let $M \models T$ be uncountable. Let $A, B \subset M$ be infinite definable sets, with $B$ strongly minimal. Then there exists a proper stratification of $A$ with respect to $B$.
\end{fact}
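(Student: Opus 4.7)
The plan is to obtain the proper stratification by first producing a definable coordinatization of $A$ over $B$ of length exactly $n = MR(A)$, and then projecting onto a single $B$-coordinate.

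By a coordinatization I mean a formula $\chi(x, \bar{y}, \bar{e})$ with $|\bar{y}| = n$ and $\bar{e}$ a finite parameter tuple absorbing the parameters defining $A$, such that every $a \in A$ satisfies $\chi(a, \bar{b}, \bar{e})$ for some $\bar{b} \in B^n$, and for every $\bar{b} \in B^n$ the fiber $\chi(M, \bar{b}, \bar{e}) \cap A$ is finite. To obtain such $\chi$, I would invoke the almost strong minimality of $T$: since $T$ is $\aleph_1$-categorical and $B$ is strongly minimal, by unidimensionality one has $M \subseteq \mathrm{acl}(B \cup F)$ for some finite parameter set $F$, so every $a \in A$ is algebraic over some finite $B$-tuple together with $F$. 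By compactness combined with Fact \ref{finitecover} (which uniformly bounds finite covers in $\aleph_1$-categorical theories), this is witnessed by a single formula $\chi_0$ with some uniform bound $k$ on the length of the $B$-tuple. The pregeometric exchange property on $B$, together with additivity of Morley rank in totally transcendental theories, then lets one trim $k$ to exactly $n$: a generic $a \in A$ of rank $n$ is algebraic over an independent $B$-tuple of length exactly $n$, and a non-generic $a$ of smaller rank is algebraic over a shorter tuple, which can be padded out to length $n$ with arbitrary $B$-elements.

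Once $\chi$ is in hand, set
\[
\psi(x, v, \bar{e}) \ :=\ x \in A \ \wedge\ \exists y_2, \ldots, y_n \in B\ \chi(x, v, y_2, \ldots, y_n, \bar{e}).
\]
Every $a \in A$ lies in $\psi(M, b, \bar{e})$ for $b$ the first coordinate of a witnessing $\bar{b}$, so $A = \bigcup_{b \in B} \psi(M, b, \bar{e})$. For each fixed $b \in B$, the set
\[
S_b \ =\ \{(x, \bar{y}') \in A \times B^{n-1} : \chi(x, b, \bar{y}', \bar{e})\}
\]
projects onto its image in $B^{n-1}$ via $\bar{y}'$ with finite fibers, so Lemma \ref{surj}.2 gives $MR(S_b) \leq n - 1$. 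Since $\psi(M, b, \bar{e})$ is the image of $S_b$ under projection to the $x$-coordinate, Lemma \ref{surj}.1 (applied with $R = 0$) yields $MR(\psi(M, b, \bar{e})) \leq MR(S_b) \leq n - 1$, so $\psi$ is proper.

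The hard part I expect is the uniformization: extracting a single formula $\chi$ with coordinate length exactly $n$ that covers all of $A$, including low-rank points, rather than just its generic points. Almost strong minimality over $B$ gives the coordinatization pointwise with some uniform $k \geq n$; refining to $k = n$ definably, rather than on a case-by-case basis, is the principal technical step and the main content of Zil'ber's lemma.
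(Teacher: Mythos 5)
The paper does not prove this statement; it is imported verbatim as a Fact from Zil'ber, so there is no proof of record here to compare against. Evaluating your argument on its own terms, it contains a fatal gap at the very first step.

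Your plan hinges on the claim that $\aleph_1$-categoricity of $T$, together with strong minimality of $B$, forces $M \subseteq \mathrm{acl}(B \cup F)$ for some finite $F$ --- in other words, that $T$ is \emph{almost strongly minimal} over $B$. This is false. The standard counterexample is the theory of the abelian group $(\mathbb{Z}/4\mathbb{Z})^{(\omega)}$, which is $\aleph_1$-categorical of Morley rank $2$. Here $B = 2G$ is strongly minimal, but $\mathrm{acl}(2G \cup F) = 2G + \langle F \rangle$ for any finite $F$, which has infinite index in $G$; so almost all of $G$ fails to be algebraic over $B$ together with any finite parameter set. What $\aleph_1$-categoricity does give (Baldwin--Lachlan) is that $M$ is \emph{prime and atomic} over $B \cup \bar d$, which is strictly weaker than $M \subseteq \mathrm{acl}(B \cup \bar d)$, and is not enough to run your coordinatization argument: with no bound on the size of fibers of $a \mapsto \bar b$, you cannot produce the formula $\chi$ with finite $B^n$-fibers covering all of $A$. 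Your subsequent rank bookkeeping via Lemma \ref{surj} is fine conditionally, but it never gets off the ground because $\chi$ does not exist in general.

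The point of Zil'ber's stratification lemma is precisely to handle the non--almost-strongly-minimal case. One cannot reduce it to coordinatizing $A$ over $B$; instead Zil'ber argues by induction on $MR(A)$, exploiting non-orthogonality between $A$ and $B$ (every infinite definable set in an $\aleph_1$-categorical theory is non-orthogonal to every strongly minimal set) to produce, for each generic $a$, an element $b$ in a conjugate of $B$ with $MR(a/b) < MR(A)$, and then patching these into a single definable family of lower-rank slices. If you want to repair your write-up you would need to replace the appeal to almost strong minimality with an argument along those lines; as written, the step ``$M \subseteq \mathrm{acl}(B \cup F)$'' is simply not available.
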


We need a slightly stronger statement for our main proof, which follows quickly from Zil'ber's result and from the saturation of any uncountable model of $T$.

\begin{corollary}
\label{stratcor}
Let $T$ be an uncountably categorical theory. Let $\varphi(x,\bar{y})$ and $\theta(v,\bar{w})$ be formulas. Then there are formulas $\psi_1(x,v,\bar{z}), \ldots, \psi_n(x,v,\bar{z})$ such that for all uncountable $M \models T$ and $\bar{b}, \bar{d} \in M$ such that $\theta(M,\bar{d})$ is strongly minimal, there is an $i$ such that for some $\bar{c} \in M$, the formula $\psi_i(x,v,\bar{c})$ is a proper stratification of $\varphi(M,\bar{b})$ with respect to $\theta(M,\bar{d})$.

Furthermore, we may choose formulas $\psi_i$ so that in the above paragraph, there is a \emph{unique} $i$ such that for some $\bar{c} \in M$, the formula $\psi_i(x,v,\bar{c})$ is a proper stratification.
\end{corollary}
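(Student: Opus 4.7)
The plan is to combine Zil'ber's existence result (Fact \ref{stratfact}) with a compactness argument, leveraging the first-order definability of the ``proper stratification'' relation in the parameters (Remark \ref{stratrem}), which in turn rests on the definability of Morley rank in uncountably categorical theories (Fact \ref{rankfact}.2).

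For the existence of the finite list, I would argue by contradiction via compactness. Suppose no finite list $\psi_1, \ldots, \psi_n$ of $L$-formulas suffices. Then for every such list there exist $M \models T$ and $\bar{b}, \bar{d}$ such that $\theta(M, \bar{d})$ is strongly minimal, $\varphi(M, \bar{b})$ is infinite, and no $\psi_i(x, v, \bar{c})$ is a proper stratification of $\varphi(M, \bar{b})$ with respect to $\theta(M, \bar{d})$ for any $\bar{c}$. I would then consider the partial type $\Sigma(\bar{y}, \bar{w})$ over $T$ consisting of: the $L$-sentence expressing that $\theta(M, \bar{w})$ is strongly minimal (definable via Fact \ref{rankfact}.2), the sentence that $\varphi(M, \bar{y})$ is infinite (definable by Fact \ref{finitecover}), and, for each $L$-formula $\psi(x, v, \bar{z})$, the sentence stating that for all $\bar{c}$, $\psi(x, v, \bar{c})$ is not a proper stratification of $\varphi(M, \bar{y})$ with respect to $\theta(M, \bar{w})$, which is first-order in $\bar{y}, \bar{w}$ by Remark \ref{stratrem}. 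The assumption makes every finite fragment of $\Sigma$ satisfiable with $T$, so compactness yields a model realizing $\Sigma$; passing to an uncountable elementary extension (which preserves these first-order conditions) then contradicts Fact \ref{stratfact}.

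For the \emph{Furthermore} clause, given the finite list from the first part, I would let $\chi_i(\bar{y}, \bar{w})$ be the $L$-formula asserting that there exists $\bar{c}$ such that $\psi_i(x, v, \bar{c})$ is a proper stratification of $\varphi(M, \bar{y})$ with respect to $\theta(M, \bar{w})$ (first-order by Remark \ref{stratrem}). The $\chi_i$'s cover the valid $(\bar{b}, \bar{d})$-space by the existence part, so the cells $D_i := \chi_i \setminus \bigcup_{j<i} \chi_j$ form a definable partition. I would then expand each $\psi_i$'s parameter tuple to $(\bar{z}, \bar{y}_0, \bar{w}_0)$ and redefine it as a new formula $\psi_i'(x, v, \bar{z}, \bar{y}_0, \bar{w}_0)$ that reduces to $\psi_i(x, v, \bar{z})$ precisely when $D_i(\bar{y}_0, \bar{w}_0)$ holds together with a ``target match'' clause (e.g., conjoining $\varphi(x, \bar{y}_0)$ and requiring the union of the fibers over $\theta(M, \bar{w}_0)$ to equal $\varphi(M, \bar{y}_0)$) that forces any stratification produced by $\psi_i'$ to have underlying target $\varphi(M, \bar{y}_0)$ with $(\bar{y}_0, \bar{w}_0) \in D_i$. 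Taking $(\bar{y}_0, \bar{w}_0) := (\bar{b}, \bar{d})$ then selects the unique $i$ with $(\bar{b}, \bar{d}) \in D_i$.

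The principal obstacle is to confirm that all the requisite predicates (strong minimality, infiniteness of $\varphi(M, \bar{y})$, the stratification condition) are first-order in the parameters so that the compactness step goes through; once Remark \ref{stratrem} and Fact \ref{rankfact}.2 are invoked, this is routine. The uniqueness refinement is a delicate but essentially routine modification leveraging the definable partition $\{D_i\}_{i=1}^n$ to prevent multiple $\psi_i'$'s from spuriously stratifying the same target.
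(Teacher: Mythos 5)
Your proof follows essentially the same route as the paper's: for existence you compile the ``no finite list of $\psi$'s stratifies $\varphi(M,\bar{y})$ over $\theta(M,\bar{w})$'' conditions into a partial type using Remark~\ref{stratrem} and the definability of Morley rank, verify finite satisfiability from the assumed failure, and then realize the type to contradict Fact~\ref{stratfact}. The only cosmetic difference is that the paper realizes the type in an uncountable model by appealing directly to saturation (Fact~\ref{satfact}), whereas you realize it via compactness and then pass to an uncountable elementary extension; both are fine, and your inclusion of the ``$\varphi(M,\bar y)$ is infinite'' clause in the type is a sensible precaution that the paper leaves implicit. For uniqueness, both you and the paper pad the parameter tuple and conjoin a condition that cuts down to a definable cell; the paper's phrasing (``$\psi_2$ implies $\psi_1$ does not stratify'') and your $D_i = \chi_i \setminus \bigcup_{j<i}\chi_j$ construction are the same idea, with yours being somewhat more explicit.

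One caution on the ``Furthermore'' clause, which applies to the paper's sketch as much as to yours: since ``$\psi_i'(x,v,\bar e)$ is a proper stratification of $\varphi(M,\bar b)$ with respect to $\theta(M,\bar d)$'' is a condition on the \emph{sets} $\varphi(M,\bar b),\theta(M,\bar d)$ and the parameters $\bar e$ range over all of $M$, the padding coordinates $(\bar y_0,\bar w_0)$ in $\bar e$ need not equal $(\bar b,\bar d)$, and the ``target match'' clause you propose (equality of the union of fibers over $\theta(M,\bar w_0)$ with $\varphi(M,\bar y_0)$) constrains the stratification only relative to $\theta(M,\bar w_0)$, not relative to the actual $B=\theta(M,\bar d)$ in question. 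So it is not immediate that the clause forces $A=\varphi(M,\bar y_0)$ when $B\neq\theta(M,\bar w_0)$, and some extra argument (e.g.\ also conjoining $\theta(v,\bar w_0)$ and exploiting strong minimality, or simply reformulating the corollary so that the conclusion is the definable partition $\{D_i(\bar y,\bar w)\}$ itself rather than uniqueness of $i$, which is all that the subsequent proof of Proposition~\ref{onevarprop} actually uses) is needed to close this. You correctly flag the step as delicate; it deserves a full verification rather than being labeled routine, but this does not represent a divergence from the paper's method.
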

\begin{proof}
Consider the partial type $\Pi(\bar{y},\bar{w})$ consisting of the formulas 

\[\forall \bar{z} [Mor_{\theta(x,\bar{w}) \wedge \pi(x,\bar{z}), 0}(\bar{z},\bar{w}) \vee Mor_{\theta(x,\bar{w}) \wedge \neg \pi(x,\bar{z}), 0}(\bar{z},\bar{w})]\]

and

\[ \forall{\bar{z}} [\neg PrStrat_{\psi,\varphi, \theta}(\bar{y},\bar{z},\bar{w})]\]

for all $L$-formulas $\pi(x,\bar{z})$ and $\psi(x,v,\bar{z})$ (recall $Mor_{\phi,n})$ from Definition \ref{deffinmor} and $PrStrat_{\varphi, \psi, \theta}$ from Defintion \ref{prstrat}.) If $M \models \Pi(\bar{b},\bar{d})$, the first formula schemata tells us that $\theta(M,\bar{d})$ is strongly minimal, and the second formula schemata tells us that there is no proper stratification of $\varphi(M,\bar{b})$ with respect to $\theta(M,\bar{d})$.

Suppose the first paragraph of the corollary were false. Then for any finite collection of $L$ formulas $\psi_1(x,v,\bar{z}), \ldots, \psi_n(x,v,\bar{z})$, there is an $M \models T$ and $\bar{b},\bar{d} \in M$ such that $\theta(M,\bar{d})$ is strongly minimal and such that for each $i$, there is no $\bar{c} \in M$ such that $\psi_i(x,v,\bar{c})$ is a proper stratification of $\varphi(M,\bar{b})$ with respect to $\theta(M,\bar{d})$. Then $\Pi(\bar{y},\bar{w})$ is consistent: we can realize any finite subset of $\Pi(\bar{y},\bar{w})$ with such a $\bar{b},\bar{d}$. Choose an uncountable $M$ which is a model of $T$. Then $M$ is a saturated model, by categoricity. Therefore $\Pi$ is realized by some $\bar{b},\bar{d} \in M$. Let $A = \varphi(M,\bar{b})$ and $B = \theta(M,\bar{d})$. Then $B$ is strongly minimal, and there is no proper stratification of $A$ with respect to $B$, contradicting Fact \ref{stratfact}.

Therefore the first paragraph of the corollary holds. We can force the index $i$ to be unique by modifying our formulas $\psi_i$ so that, for instance, $\psi_2(x,v,\bar{c})$ implies that $\psi_1(x,v,\bar{c})$ does not properly stratify $\varphi(M,\bar{b})$ with respect to $\theta(M,\bar{b'})$.
\end{proof}

The two statements in the following Lemma are noted and used in proofs by Zil'ber in \cite{zilberbook}. We use them to obtain the fact that the degree of the polynomial giving the cardinality of a definable set is exactly the Morley rank of the set. First we need another fact from \cite{zilberpaper} (also found in \cite{zilberbook}).
\begin{fact}[\cite{zilberpaper}, Theorem 3]
\label{stratrk}
Let $\Delta(x),\Phi(x)$ be $L$-formulas over $A \subset M$ with $MR(\Delta(M)) = m$ and let $\psi(v_0,v_1)$ be a stratification of $\Phi(M)$ with respect to $\Delta(M)$ of rank at most $n$. Then $MR(\Phi(M)) \leq m + n$.
\end{fact}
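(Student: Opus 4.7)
The plan is to reformulate the stratification as a definable surjection with controlled fibers, so that Lemma \ref{surj}.2 bounds the Morley rank of the total space. Concretely, I would form the definable set
\[X := \{(a,b) \in M^2 : b \in \Delta(M) \text{ and } M \models \psi(a,b)\}\]
and consider its coordinate projections $\pi_1(a,b) = a$ and $\pi_2(a,b) = b$. By the very definition of stratification, $\pi_1(X) = \bigcup_{b \in \Delta(M)} \psi(M,b) = \Phi(M)$. Let $Y := \pi_2(X) = \{b \in \Delta(M) : \psi(M,b) \neq \emptyset\}$, a definable subset of $\Delta(M)$, so $MR(Y) \leq m$. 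The map $\pi_2 : X \to Y$ is a definable surjection whose fiber over any $b \in Y$ is in definable bijection with $\psi(M,b)$, hence has Morley rank at most $n$ by hypothesis on the stratification.

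Applying Lemma \ref{surj}.2 to $\pi_2$ then yields $MR(X) \leq n + MR(Y) \leq m + n$. Since $\Phi(M) = \pi_1(X)$ is a definable image of $X$, and Morley rank does not increase under definable surjections (a straightforward transfinite induction lifting disjoint definable subsets of the image through their preimages), we conclude $MR(\Phi(M)) \leq MR(X) \leq m + n$, as required.

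The main caveat, and the only genuine technical obstacle, is that Lemma \ref{surj}.2 assumes $\omega_1$-saturation of $M$ together with definability of Morley rank. In the uncountably categorical setting of this paper, Fact \ref{rankfact}.2 supplies definability, and Fact \ref{satfact} lets us pass to any uncountable elementary extension $M' \succ M$ to gain saturation. Since the Morley rank of an $L$-formula and the property of being a stratification of rank at most $n$ both transfer across elementary extensions, proving the bound in $M'$ establishes it in $M$ as well. In the general totally transcendental setting of Zil'ber's original statement, one can instead argue by induction on $m$ inside a sufficiently saturated elementary extension, splitting $\Delta(M)$ into a finite-rank base case (handled by a finite union of sets of rank at most $n$) and an inductive step that mirrors the argument of Lemma \ref{surj}.2.
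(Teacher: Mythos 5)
The paper does not actually prove Fact~\ref{stratrk}: it is imported verbatim from Zil'ber's paper (cited there as Theorem~3), so there is no internal proof to compare against. Your proposal is therefore an independent derivation, and as such it is essentially correct in the uncountably categorical setting that the paper actually needs. The reduction to Lemma~\ref{surj}.2 via the incidence set $X = \{(a,b) : b \in \Delta(M),\ M \models \psi(a,b)\}$ and the two coordinate projections is the natural move, and you correctly note that the image $\pi_1(X) = \Phi(M)$ cannot have larger rank than $X$ (a special case of Lemma~\ref{surj}.1 with $R=0$). There is also no circularity: Lemma~\ref{surj} is proved in the paper without any appeal to stratification, whereas Fact~\ref{stratrk} only feeds into Lemma~\ref{stratlem} later.

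Two small points are worth tightening. First, Lemma~\ref{surj} presupposes that $MR(X)$ already exists and is finite; in the uncountably categorical case this is free from Fact~\ref{rankfact}.1, but you should say so, since otherwise the application looks like it is quietly assuming part of the conclusion. Second, your final sentence about the general totally transcendental case is too optimistic as stated: definability of Morley rank, which Lemma~\ref{surj}.2 genuinely needs, can fail in an arbitrary totally transcendental theory, so the ``induction on $m$'' would have to be rebuilt from scratch rather than just borrowing Lemma~\ref{surj}.2 in a saturated extension. For the purposes of this paper, though, the uncountably categorical version you prove is all that is ever used, and Zil'ber's own argument remains the reference for the general statement.
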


\begin{lemma}
\label{stratlem}
Let $M$ be a structure. Let $X, D \subseteq M$ be definable subsets with $D$ strongly minimal. Let $\psi(x,y)$ be a proper stratification of $X$ with respect to $D$ (possibly with parameters). 

\begin{enumerate}
\item There is an $N \in \omega$ such that $rk(\{a \in X : |\psi(a,D)| > N\}) < rk(X)$.

\item The set $\{d \in D : rk(\psi(X,d)) = rk(X) - 1\}$ is cofinite in $D$.
\end{enumerate}
\end{lemma}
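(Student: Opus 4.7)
The plan is to handle the two statements separately, starting with Statement 2 since it admits the more direct contradiction argument. Let $E = \{d \in D : MR(\psi(X,d)) = MR(X) - 1\}$. By Fact \ref{rankfact}.2 (together with Remark \ref{stratrem}) this set is definable, so by strong minimality of $D$ it is either finite or cofinite in $D$; I would rule out the former. Supposing $E$ is finite, set $F = D \setminus E$, which is cofinite in $D$ and hence has Morley rank $1$. The properness of the stratification together with the definition of $E$ force $MR(\psi(M,d)) \leq MR(X) - 2$ for every $d \in F$. Using the identity $X = \bigcup_{d \in D} \psi(M,d)$, I would split $X = X_0 \cup X_1$ with $X_0 = \bigcup_{d \in E} \psi(M,d)$ and $X_1 = \bigcup_{d \in F} \psi(M,d)$: the former is a finite union of sets of rank at most $MR(X) - 1$ and so has rank at most $MR(X) - 1$, while the restriction of $\psi$ to $F$ exhibits $X_1$ as a stratification with respect to $F$ of rank at most $MR(X) - 2$, so Fact \ref{stratrk} gives $MR(X_1) \leq 1 + (MR(X) - 2) = MR(X) - 1$. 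Combining yields $MR(X) \leq MR(X) - 1$, a contradiction.

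For Statement 1, I would first pin down the choice of $N$ by applying Fact \ref{finitecover} to the formula $\psi(x,y) \wedge y \in D$: this produces $N \in \omega$ such that for every $a$, the definable subset $\psi(a,D) \subseteq D$ is either infinite or of size at most $N$. Strong minimality of $D$ upgrades ``infinite'' to ``cofinite'', so $|\psi(a,D)| > N$ is equivalent to $MR(\psi(a,D)) = 1$. Setting $X' = \{a \in X : |\psi(a,D)| > N\}$ (definable by an existential formula asserting the existence of $N+1$ distinct witnesses in $D$), I would consider the definable relation $R = \{(a,d) \in X' \times D : M \models \psi(a,d)\}$ and apply Lemma \ref{surj} to its two coordinate projections. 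The first projection surjects onto $X'$ with each fiber cofinite in $D$, hence of rank $1$, so Lemma \ref{surj}.1 yields $MR(R) \geq MR(X') + 1$. The second projection surjects onto its image $Y \subseteq D$, which has $MR(Y) \leq 1$, and each of its fibers sits inside $\psi(M,d)$ and so has rank at most $MR(X) - 1$; Lemma \ref{surj}.2 then yields $MR(R) \leq (MR(X) - 1) + MR(Y) \leq MR(X)$. Chaining the two inequalities gives $MR(X') < MR(X)$, as required.

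The step I expect to require the most care is the invocation of Lemma \ref{surj}.2 in Statement 1, since it requires both definability of Morley rank and $\omega_1$-saturation of $M$. Both are available in the intended setting of an uncountable model of the ambient uncountably categorical theory (by Fact \ref{rankfact}.2 and Fact \ref{satfact}), so one may assume them after, if necessary, passing to an elementary extension; the rank-drop conclusions then transfer back to any model by elementary equivalence. Aside from this bookkeeping, the remaining work is routine verification that the sets in play are definable and that the indices line up correctly in the rank arithmetic.
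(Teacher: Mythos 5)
Your proof of Statement 2 is essentially the same as the paper's: suppose the set $E = \{d : MR(\psi(X,d)) = MR(X)-1\}$ is not cofinite, hence finite, decompose $X$ into the part above $E$ (a finite union of rank $\leq MR(X)-1$ pieces) and the part above $D \setminus E$ (stratified with fiber rank $\leq MR(X)-2$, so of rank $\leq MR(X)-1$ by Fact \ref{stratrk}), and derive the contradiction $MR(X) \leq MR(X)-1$.

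Your proof of Statement 1 is correct but takes a genuinely different route. The paper argues by pigeonhole: pick any $N+1$ distinct $d_1,\ldots,d_{N+1} \in D$; if $|\psi(a,D)|>N$ then $\psi(a,D)$ is cofinite with $|D \setminus \psi(a,D)| \leq N$, so $a$ lies in $\psi(X,d_i)$ for at least one $i$, and hence $X'$ is covered by finitely many proper-stratum fibers, each of rank $< MR(X)$. That argument uses nothing beyond strong minimality of $D$ (which already supplies the bound $N$ without appeal to Fact \ref{finitecover}) and the definition of proper stratification, so it works verbatim for an arbitrary structure $M$. Your argument instead considers the incidence relation $R \subseteq X' \times D$ and sandwiches $MR(R)$ between $MR(X')+1$ (via Lemma \ref{surj}.1 applied to the projection onto $X'$, whose fibers are cofinite in $D$) and $MR(X)$ (via Lemma \ref{surj}.2 applied to the projection onto $D$, whose fibers have rank $\leq MR(X)-1$ by properness). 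This is a clean fibered-rank computation, but Lemma \ref{surj}.2 imports the extra hypotheses of $\omega_1$-saturation and definability of Morley rank, which the lemma statement itself does not assume of $M$. You correctly flag this and observe that in the paper's actual applications those hypotheses hold and that one may otherwise pass to an elementary extension; that patch is sound, but it makes the lemma less self-contained than the paper's elementary covering argument. The trade is machinery (your route leans on the already-proved additivity lemma and is arguably more transparent) versus generality and economy (the paper's route needs only strong minimality).
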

\begin{proof}
\begin{enumerate}
\item As $D$ is strongly minimal, there is an $N \in \omega$ such that for all $a \in M$ we have that either $|\psi(a,D)| \leq N$ or $|\neg \psi(a,D)| \leq N$. Let $d_1, \ldots, d_N, d_{N+1} \in D$ be distinct elements. For each $i = 1, \ldots, N+1$ let $X_i = \psi(X,d_i)$. As $\psi$ is a proper stratification, we have $MR(X_i) < MR(X)$ for each $i$. Therefore $MR(X_1 \cup \ldots \cup X_{N+1}) < MR(X)$. If $a \in X$ and $|\psi(a,D)| > N$ then $\psi(a,D)$ is cofinite in $D$ with $|D - \psi(a,D)| \leq N$. Therefore at least one of $d_1,\ldots,d_{N+1}$ must be an element of $\psi(a,D)$, that is, $M \models \psi(a,d_i)$ for some $i$. Then $a \in \varphi(X,d_i) = X_i$. Hence $\{a \in X : |\psi(a,D)| > N\} \subseteq X_1 \cup \ldots \cup X_{N+1}$, and therefore $MR(\{a \in X : |\psi(a,D)| > N\}) \leq MR(X_1 \cup \ldots X_{N+1}) < MR(X)$.

\item Suppose otherwise. Then $\{d \in D : MR(\psi(M,d)) = MR(X) - 1\}$ is finite and $\{d \in D : MR(\psi(M,d)) \leq MR(X) - 2\}$ is cofinite. Let $\{d \in D : MR(\psi(M,d)) = MR(X) - 1\} = \{d_1,\ldots,d_n\}$. For $i = 1, \ldots, n$ let $X_i = \psi(M,d_i)$ and let $X' = \bigcup_{d \in D - \{d_1,\ldots,d_n\}} \psi(M,d)$. Then $X = X_1 \cup \ldots \cup X_n \cup X'$. Each $X_i$ has Morley rank $MR(X) - 1$. $X'$ is stratified by $\psi(x,y)$ over $D - \{d_1,\ldots,d_n\}$, and this stratification has rank at most $MR(X) - 2$. Therefore $MR(X') \leq MR(X) - 2 + MR(D - \{d_1,\ldots,d_n\}) = MR(X) - 1$. So $X$ is the union of finitely many sets of Morley rank at most $MR(X) - 1$, an impossibility. Therefore the set $\{d \in D : rk(\psi(X,d)) = rk(X) - 1\}$ must be cofinite in $D$.

\end{enumerate}

\end{proof}

\section{Main Theorem}
\begin{theorem}
\label{main}
Let $T$ be an uncountably categorical theory in the language $L$. Let $\theta(v,\bar{w})$ be an $L$-formula. Then for every $L$-formula $\varphi(x_1,\ldots,x_n,\bar{y})$, there are finitely many polynomials $F_1(X),\ldots,F_r(X) \in \mathbb{Q}[X]$ and $L$-formulas $\pi_1(\bar{y},\bar{w}),\ldots,\pi_r(\bar{y},\bar{w})$ such that for all pseudofinite ultraproducts $M$ and all $\bar{d} \in M$ such that $D = \theta(M,\bar{d})$ is strongly minimal, we have that for all $\bar{b} \in M^{|\bar{y}|}$, the pseudofinite cardinality $|\varphi(M^n,\bar{b})|$ is $F_i(|D|)$ for some $i$, and furthermore for each $i$ the set \[\{\bar{b} \in M :  |\varphi(M^n,\bar{b})| = F_i(|D|)\}\] is definable over $X$ by $\pi_{\varphi, i}(\bar{y},\bar{d})$. 

Additionally, if $\bar{b}$ satisfies $\pi_{\varphi, i}(\bar{y},\bar{d})$ , then the degree of the polynomial $F_i(X)$ is the Morley rank of the set $\varphi(M^n,\bar{b})$.
\end{theorem}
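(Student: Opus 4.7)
The proof proceeds by induction on $n = |\bar{x}|$, with $n = 1$ being the heart of the argument. For $n = 0$ the statement is trivial. For $n \geq 2$, the decomposition $|\varphi(M^n, \bar{b})| = \sum_{a_1 \in M} |\varphi(a_1, M^{n-1}, \bar{b})|$ reduces the problem to the $(n-1)$-case applied to the slices and the $n = 1$ case applied to the definable set of $a_1$'s giving each polynomial value of the slice cardinality; summing finitely many products of polynomials yields the desired polynomial. For $n = 1$, the plan is to induct further on the Morley rank $\rho$ of the slice $\varphi(M, \bar{b})$, which by Fact \ref{rankfact}.2 and Fact \ref{finitecover} takes only finitely many values and is $L$-definable over $\bar{b}$, so one may assume $\rho$ is fixed. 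The base $\rho = 0$ gives uniformly bounded finite slices, and partitioning by counting quantifiers $\exists^{=k} x$ yields constant polynomials.

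For the inductive step $\rho \geq 1$, I invoke Corollary \ref{stratcor} to fix finitely many candidate stratification formulas $\psi_1, \ldots, \psi_p$, of which exactly one $\psi_i$ admits parameters $\bar{c}$ properly stratifying $\varphi(M, \bar{b})$ over $D$; this choice is $L$-definable in $(\bar{b}, \bar{d})$ by Remark \ref{stratrem}, so $\bar{y}$-space is refined accordingly. Within a piece, fix $X := \varphi(M, \bar{b})$ and $\psi := \psi_i(x, v, \bar{c})$. Applying Fact \ref{finitecover} to $\psi$ and $\neg \psi$ together with strong minimality of $D$ yields a standard $N$ such that $|\psi(a, D, \bar{c})| \in \{1, \ldots, N\} \cup \{|D| - N, \ldots, |D|\}$ for every $a \in X$, giving an $L$-definable partition $X = \bigsqcup_k X_k$ via counting quantifiers. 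On each nonempty $X_k$ the uniform value $B^\star = k$ permits Lemma \ref{counting}.2 applied to $\psi(x, v, \bar{c}) \wedge x \in X_k \wedge v \in D$:
\[
|X_k| = \frac{1}{B^\star} \sum_j A^\star_j \cdot |Z_j|.
\]
Because $\psi$ is a proper stratification, each $\psi(M, d, \bar{c})$ has Morley rank $\leq \rho - 1$, so the secondary inductive hypothesis (after another $\bar{b}$-refinement by the resulting pattern of indices) gives $A^\star_j = G_j(|D|)$ with $\deg G_j \leq \rho - 1$; each $Z_j \subseteq D$ is finite or cofinite, so $|Z_j| = L_j(|D|)$ with $\deg L_j \leq 1$. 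Dividing by $B^\star$ (a standard integer, or a linear polynomial $|D| - m$, in which case divisibility in $\mathbb{Q}[X]$ is forced by $|X_k|$ being integer-valued and $|D|$ being arbitrarily large in the ultraproduct) and summing over $k$ yields $|X| = F(|D|)$ with $F \in \mathbb{Q}[X]$.

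To show $\deg F = \rho$, the upper bound $\deg F \leq \rho$ is immediate from $\deg(G_j L_j) \leq (\rho - 1) + 1$. For the lower bound, Lemma \ref{stratlem}.2 ensures that cofinitely many $d \in D$ yield rank-$(\rho - 1)$ fibers, so for the corresponding ``generic'' $j$ the inductive hypothesis gives $\deg G_j = \rho - 1$ exactly, and cofiniteness of $Z_j$ gives $\deg L_j = 1$; since all $A^\star_j$ and $|Z_j|$ are positive hyperreals, their leading coefficients are positive by Lemma \ref{polynomeq}.3, so no cancellation of the degree-$\rho$ term occurs and $\deg F = \rho$. The principal obstacle is the bookkeeping required to keep the $\bar{y}$-partition uniform and $L$-definable across its successive refinements (by Morley rank of the slice, by which $\psi_i$ admits a stratifying $\bar{c}$, by which $X_k$-piece contains a given point, by the combinatorial pattern of indices arising in the inner inductive step); Facts \ref{rankfact} and \ref{finitecover}, Remark \ref{stratrem}, and Corollary \ref{stratcor} combine to certify that only finitely many cases arise at each stage and that each refinement is $L$-definable.
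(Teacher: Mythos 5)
Your strategy mirrors the paper's: an outer induction on the number of object variables with the one-variable case as the core; within it an induction on Morley rank of the slice; Corollary~\ref{stratcor} to choose a stratification $\psi$ over $D$ definably in $(\bar b,\bar d)$; a partition of the stratified set by the sizes of the reverse fibers $\psi(a,D,\bar c)$; and Lemma~\ref{counting} to extract the cardinality. The one substantive deviation is how you handle the piece $X_k$ whose reverse fibers are cofinite in $D$. The paper applies Lemma~\ref{stratlem}.1 to conclude that the union of those $X_k$ has Morley rank strictly less than $\rho$, then invokes the rank induction directly, so it only ever divides by a standard integer in Lemma~\ref{counting}.2. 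You instead feed $B^\star = |D|-m$ into Lemma~\ref{counting}.2, which gives $|X_k|$ as a rational function of $|D|$, and assert that divisibility in $\mathbb Q[X]$ is ``forced by $|X_k|$ being integer-valued and $|D|$ being arbitrarily large.'' As stated this is a gap: $|D|$ is a single hyperreal, not a sequence of large standard integers. The claim can be rescued by descending to the index set --- write $\sum_j G_jL_j = Q(X)(X-m)+r$ with $Q\in\mathbb Q[X]$, $r\in\mathbb Q$, note that for $\mathcal U$-many $\lambda$ the integer $|X_{k,\lambda}|$ equals $Q(|D_\lambda|)+\frac{r}{|D_\lambda|-m}$, clear denominators of $Q$, and let $|D_\lambda|\to\infty$ to force $r=0$ --- but this must actually be said, and the paper's route via Lemma~\ref{stratlem}.1 sidesteps it entirely. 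A smaller omission: for $n\ge 2$ you do not address why the degree of the resulting polynomial $\sum_i G_i(X)H_{i,\sigma_i}(X)$ equals the Morley rank of $\varphi(M^n,\bar b)$; the paper needs the fiber-rank additivity of Lemma~\ref{surj} (applied to the projection along the sliced coordinate) for exactly this point, and your sketch does not invoke it or an equivalent.
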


We first prove Theorem \ref{main} for the case $n = 1$ (Proposition \ref{onevarprop}), which takes up most of the proof. The full theorem will follow from a relatively fast inductive fiber-decomposition argument.

First we give an overview of our proof of the single-variable case, leaving aside the fine details to the forthcoming sequence of lemmas. Let $X \subseteq M$ be a definable subset of rank $R$ of which we will find the pseudofinite cardinality. Let $\psi(x,y)$ be a stratification of $X$ with respect to $D$, with parameters hidden for the moment. Partition $X$ into $X_1, X_2, \ldots, X_s$ so that the pseudofinite cardinality of the fiber $\psi(a,D)$ is a constant $A^\star_j$ over $a \in X_j$. By strong minimality of $D$, there are only finitely many such cardinalities. Then $|X| = \sum_{i=1}^s |X_i|$, so it sufices to show that each $X_i$ has cardinality $F(|D|)$ for some $F \in \mathbb{Q}[x]$ (the details of definability in this argument are in Lemma \ref{stratcase}). If $rk(X_j) < rk(X)$ then this is true by induction. If $rk(X_j) = rk(X)$ then this is shown in Lemma \ref{speciallem}. In that proof, we use Lemma \ref{counting} with respect to the relation $\psi(x,y)$ between $X_j$ and $D$ to express $|X_i|$ as a polynomial in the sizes of subsets of $D$ and fibers $\psi(X_i,d)$ for $d \in D$. Each of these cardinalities is itself a polynomial in $|D|$. For subsets of $D$ this holds by strong minimality, and for fibers $\psi(X_i,d)$ this holds by induction, as $\psi$ is a proper stratification.

We now prove Proposition \ref{onevarprop} rigorously, and in a particular form which lends itself to our inductive argument. 

\begin{proposition}
\label{onevarprop}
For every $R \in \omega$ and every $L$-formula $\varphi(x,\bar{y})$, there are polynomials $F_1(x),\ldots,F_n(x)$ of degree $R$ and $L$-formulas $\pi_1(\bar{y},\bar{w}),\ldots,$ $\pi_n(\bar{y},\bar{w})$ such that for all $\bar{b} \in M^{|\bar{y}|}$ and all $\bar{d} \in M^{|\bar{w}|}$ such that $D = \theta(M,\bar{d})$ is strongly minimal,

\begin{itemize}
\item If $MR(\varphi(x,\bar{b})) = R$ then $|\varphi(M,\bar{b})| = F_i(|D|)$ for some $i$, and
\item For all $i$, the set \[\{\bar{b} \in M^{|\bar{y}|} : MR(\varphi(M,\bar{b})) = R \mbox{ and } |\varphi(M,\bar{b})| = F_i(|D|)\}\] is definable by $\pi_i(\bar{y},\bar{d})$.
\end{itemize}
\end{proposition}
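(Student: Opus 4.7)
I would proceed by induction on $R$. For the base case $R = 0$, Fact \ref{finitecover} yields a uniform bound $N = Num_T(\varphi)$ on the size of any finite $\varphi(M,\bar b)$. For each $0 \le k \le N - 1$ I take $F_k(x) = k$ (a constant polynomial of degree $0$) and let $\pi_k(\bar y, \bar w)$ be the $L$-formula asserting that exactly $k$ distinct $x$ satisfy $\varphi(x, \bar y)$, conjoined with the $L$-definable condition $MR(\varphi(x,\bar y)) = 0$ from Fact \ref{rankfact}.2.

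For the inductive step at $R > 0$, apply Corollary \ref{stratcor} to obtain a finite family of candidate stratifications $\psi_1(x,v,\bar z), \ldots, \psi_s(x,v,\bar z)$, and by the uniqueness clause treat each $j$ as a separate case: fix $j$ and assume $\psi_j(x,v,\bar c)$ is a proper stratification of $X := \varphi(M,\bar b)$ with respect to $D := \theta(M,\bar d)$. Strong minimality of $D$ furnishes a uniform bound $N'$ such that for every $a \in X$ the cardinality $|\psi_j(a, D, \bar c)|$ is either a constant $\le N'$ or of the form $|D| - c$ with $c \le N'$; these finitely many possible values determine an $L$-definable partition $X = \bigsqcup_A X_A$ according to the fiber cardinality $A$. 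Pieces $X_A$ of rank strictly less than $R$ are absorbed by the inductive hypothesis applied to their defining formula. For $X_A$ of full rank $R$, apply Lemma \ref{counting}.2 to the relation $\psi_j$ on $X_A \times D$: the fibers over $X_A$ have constant cardinality $A$, while the reverse fibers $\psi_j(X_A, d, \bar c) \subseteq \psi_j(M, d, \bar c)$ have Morley rank at most $R - 1$ by properness of the stratification. The inductive hypothesis then yields finitely many polynomials $G_i \in \mathbb{Q}[x]$ of degree $\le R - 1$ capturing those reverse-fiber cardinalities on an $L$-definable partition $D = \bigsqcup_i Z_i$, and strong minimality renders each $|Z_i|$ as a polynomial in $|D|$. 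Lemma \ref{counting}.2 then gives
\[
|X_A| = \frac{1}{A} \sum_i G_i(|D|) \cdot |Z_i|.
\]

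The main obstacle is to show this rational expression is actually a polynomial in $|D|$ with rational coefficients of exact degree $R$. When $A$ is a constant this is immediate. When $A = |D| - c$, I would write the numerator as $Q(x)(x - c) + r$ with $r \in \mathbb{Q}$, so that $|X_A| = Q(|D|) + r/(|D| - c)$; an ultraproduct argument using that $|X_A|$ is internally a non-negative integer in each finite factor, combined with the polynomial remainder theorem after clearing denominators and the unboundedness of $|D|_\lambda$ for $\mathcal{U}$-many $\lambda$, forces $r = 0$. The exact degree $R$ then follows from Lemma \ref{stratlem}.2, which guarantees that for cofinitely many $d \in D$ the fiber $\psi_j(M, d, \bar c)$ has rank exactly $R - 1$, so some $G_i$ has degree exactly $R - 1$ and is paired with a $|Z_i|$ of degree $1$; combining this with Lemma \ref{polynomeq} pins the degree of the resulting polynomial for $|X_A|$ at $R$. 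Uniform $L$-definability of the $\pi_i$ is assembled from Fact \ref{rankfact}.2, Remark \ref{stratrem}, the $L$-definability of the fiber-cardinality partition, and Lemma \ref{polynomeq}.1 to ensure the labeling by polynomials is unambiguous.
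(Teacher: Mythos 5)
Your proposal follows the same induction-on-rank skeleton as the paper: identical base case, the reduction via Corollary \ref{stratcor}, partitioning by fiber cardinality over $D$, and closing the full-rank pieces with Lemma \ref{counting}.2 together with the inductive hypothesis on the reverse fibers. That matches the chain Lemma \ref{baselem} $\to$ Lemma \ref{speciallem} $\to$ Lemma \ref{stratcase} $\to$ Proposition \ref{onevarprop} in the paper.

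The one genuine divergence is your treatment of the piece $X_A$ whose $\psi_j$-fibers in $D$ are \emph{cofinite}, i.e.\ $A = |D| - c$. You allow this piece to have full rank $R$ and then argue that the rational expression $\frac{1}{|D|-c}\sum_i G_i(|D|)|Z_i|$ is secretly a polynomial, by long division $P(x) = Q(x)(x-c) + r$ and an infinitesimal-versus-bounded-denominator argument forcing $r=0$. That argument is sound (the denominator of $Q$'s coefficients is standard, so $d(|X_A|-Q(|D|))$ is a hyperinteger, hence not a nonzero infinitesimal), but it is labour spent on a vacuous case: Lemma \ref{stratlem}.1 was proved precisely to show that $\{a \in X : \psi_j(a,D,\bar c)\text{ cofinite}\}$ has Morley rank $< R$ whenever $\psi_j$ is a proper stratification, so the paper simply absorbs that piece into the inductive hypothesis (it is the set called $\varphi_0$ in Lemma \ref{stratcase}) and never needs to divide by a non-constant. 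Using Lemma \ref{stratlem}.1 is cleaner and avoids having to re-derive polynomiality post hoc.

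Two smaller points worth tightening. First, when you invoke Lemma \ref{stratlem}.2 for the degree claim, you cite it for $\psi_j(M,d,\bar c)$ as a stratification of $X$, but the $G_i$ and $Z_i$ in your counting formula refer to the reverse fibers $\psi_j(X_A,d,\bar c)$; you need to observe that $\psi_j$ restricted to $X_A \times D$ is still a proper stratification of $X_A$ (the fibers are subsets of the original ones, and $MR(X_A)=R$), and then apply Lemma \ref{stratlem}.2 to $X_A$, as the paper does implicitly inside Lemma \ref{speciallem}. Second, the final definability bookkeeping — producing $\pi_i(\bar y,\bar w)$ independent of the witnessing $\bar c$ — is only gestured at; the paper handles it by quotienting the finite index set of ``shape'' tuples $\sigma$ by the relation $F_\sigma = F_{\sigma'}$ (using Lemma \ref{polynomeq}.1 to show the choice of $\bar c$ does not change the polynomial), and then taking the $\pi_i$ to quantify over all stratifying $\bar c$. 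Your outline points at the right ingredients but would need this step spelled out.
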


We will prove this proposition by induction on $R$, through a sequence of lemmas. First we handle the base case $R = 0$.

For this sequence of lemmas, recall (Definition \ref{numt}) that $Num_T(\varphi(\bar{x},\bar{y}))$ is the least number $N \in \omega$ such that whenever $M \models T$ and $\bar{b} \in M^{|\bar{y}|}$, if $\varphi(M^{|\bar{x}|},\bar{b})$ is finite then $|\varphi(M^{|\bar{x}|},\bar{b})| < N$.

\begin{lemma}
\label{baselem}
The statement of Proposition \ref{onevarprop} holds for $R = 0$.
\end{lemma}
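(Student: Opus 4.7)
The plan is to observe that when $R=0$ the problem essentially trivializes: $MR(\varphi(M,\bar{b}))=0$ forces $\varphi(M,\bar{b})$ to be a finite nonempty set, and the pseudofinite cardinality of any such set agrees with its ordinary counting cardinality (as noted in Section 2). Combining this with the uniform finite-cover bound from Fact \ref{finitecover} will let us read off the finitely many constant polynomials.

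More concretely, let $N = Num_T(\varphi(x,\bar{y}))$, using Definition \ref{numt} and Fact \ref{finitecover}. Then whenever $\varphi(M,\bar{b})$ is finite, $|\varphi(M,\bar{b})|$ is one of the integers $1,2,\ldots,N-1$ (it is nonzero because $MR(\varphi(M,\bar{b}))=0$ rules out the empty set). For each $i$ in this range I set $F_i(X) = i \in \mathbb{Q}[X]$, a constant polynomial of degree $0=R$, and I define
\[ \pi_i(\bar{y},\bar{w}) \;:=\; \chi_0(\bar{y}) \;\wedge\; \exists^{=i} x\, \varphi(x,\bar{y}), \]
where $\chi_0(\bar{y})$ is the $\emptyset$-definable formula furnished by Fact \ref{rankfact}.2 that isolates those $\bar{b}$ with $MR(\varphi(M,\bar{b}))=0$, and $\exists^{=i}$ abbreviates the usual first-order ``there exist exactly $i$'' quantifier. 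The variables $\bar{w}$ appear only as dummies, since at rank $0$ no reference to the strongly minimal parameter set is needed.

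To verify correctness, suppose $\bar{b} \in M^{|\bar{y}|}$ satisfies $MR(\varphi(M,\bar{b})) = 0$. Then $\varphi(M,\bar{b})$ is finite and nonempty, so by the bound above it has exactly $i$ elements for a unique $i \in \{1,\ldots,N-1\}$; thus $\bar{b}$ satisfies $\pi_i(\bar{y},\bar{d})$ (for any choice of $\bar{d}$, in particular one with $\theta(M,\bar{d})$ strongly minimal). Since the set is finite, its pseudofinite cardinality equals its counting cardinality, giving $|\varphi(M,\bar{b})| = i = F_i(|D|)$. Conversely, any $\bar{b}$ satisfying $\pi_i(\bar{y},\bar{d})$ has $MR(\varphi(M,\bar{b}))=0$ and size exactly $i$, so $|\varphi(M,\bar{b})| = F_i(|D|)$, as required.

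There is no real obstacle here; the only thing to be careful about is that $\varphi(M,\bar{b})$ is genuinely finite rather than merely of finite size in the pseudofinite sense, but this is guaranteed by $MR = 0$ together with Fact \ref{finitecover}, which allows us to pass from pseudofinite cardinality to ordinary cardinality. The polynomials produced are indeed of degree $R = 0$, matching the degree condition in the statement of the proposition.
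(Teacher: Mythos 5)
Your proof is correct and follows essentially the same route as the paper: use $Num_T$ to bound the possible finite cardinalities, then take constant polynomials together with ``there exist exactly $i$ witnesses'' formulas. The only (cosmetic) differences are that you explicitly conjoin the rank-zero condition $\chi_0(\bar{y})$ to each $\pi_i$ and observe that $MR=0$ rules out the empty set, both of which are slightly tidier than the paper's version (which ranges $i$ from $0$ to $n-1$ and omits the explicit rank condition).
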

\begin{proof}
Let $n = Num_T(\varphi(x,\bar{y}))$. Then for $M \models T$ and $\bar{b} \in M$ , if $RK(\varphi(M,\bar{b})) = 0$ -- which means $\varphi(M,\bar{b})$ is finite -- then $|\varphi(M,\bar{b})| < n$. We take our $F_1(x),\ldots,F_{n}(x)$ to be the constant degree-0 polynomials $0,\ldots,n-1$ and take each $\pi_i(\bar{y})$ to be an $L$-formula expressing ``$|\varphi(M,\bar{b})| = i-1$" -- note that this is a formula over $\emptyset$.
\end{proof}

Now assume $R > 0$ and that Proposition \ref{onevarprop} is proven for all ranks $r < R$.

Recall the formulas $Mor_{\varphi(x,\bar{y}), n}(\bar{y})$ from Definition \ref{deffinmor} and $PrStrat_{\psi,\varphi,\theta}(\bar{y},\bar{z},\bar{w})$ from Definition \ref{prstrat}.

\begin{lemma}
\label{speciallem}
Let $\varphi(x,\bar{y})$ be an $L$-formula. Suppose there is a formula $\psi(x,v,\bar{z})$ and a natural number $n$ such that for all $M \models T$, all $\bar{b} \in M^{|\bar{y}|}$, and all $\bar{d} \in M^{|\bar{z}|}$ such that $D_{\bar{d}} = \theta(M,\bar{d})$ is strongly minimal, if $MR(\varphi(M,\bar{b})) \leq R$ and $\varphi(M,\bar{b})$ is infinite then there is a $\bar{c} \in M^{|\bar{z}|}$ such that 

\begin{enumerate}
\item $\psi(x,v,\bar{c})$ is a proper stratification of $\varphi(M,\bar{b})$ with respect to $D_{\bar{d}}$
\item For every $a \in \varphi(M,\bar{b})$, the cardinality of the set $\psi(a,D_{\bar{d}},\bar{c})$ is $n$.
\end{enumerate}

Then Proposition \ref{onevarprop} holds of $R$ and $\varphi(x,\bar{y})$.
\end{lemma}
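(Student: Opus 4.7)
The plan is to apply Lemma \ref{counting}.2 to the relation $\psi(x,v,\bar{c})$ between $X := \varphi(M,\bar{b})$ and $D := \theta(M,\bar{d})$, where $\bar{c}$ is the witness supplied by the hypothesis. By assumption (2), every $v$-fiber $\psi(a,D,\bar{c})$ over $a \in X$ has cardinality $n$, supplying the constant $B^\star = n$ of Lemma \ref{counting}.2. In the opposite direction, since $\psi$ is a proper stratification each $x$-fiber $X \cap \psi(M,d,\bar{c})$ for $d \in D$ has Morley rank at most $R-1$. Applying the inductive hypothesis of Proposition \ref{onevarprop} to the formula $\varphi(x,\bar{y}) \wedge \psi(x,v,\bar{z})$ at each rank $r = 0, 1, \ldots, R-1$ (combining with Lemma \ref{baselem} for $r = 0$), we obtain finitely many polynomials $G^{(r)}_j \in \mathbb{Q}[X]$ of degree $r$ and $L$-formulas $\rho^{(r)}_j(v,\bar{y},\bar{z},\bar{w})$ whose realization sets in $v$, restricted to $D$, partition $D$ into sets $V^{(r)}_j$ with $|X \cap \psi(M,d,\bar{c})| = G^{(r)}_j(|D|)$ for $d \in V^{(r)}_j$.

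Next, each $V^{(r)}_j$ is an $L$-definable subset of the strongly minimal set $D$, so by strong minimality with uniform bounds (Fact \ref{finitecover}) there are finitely many integer polynomials $P$ of degree at most $1$ such that $|V^{(r)}_j| = P^{(r)}_j(|D|)$, with the selection of $P^{(r)}_j$ first-order definable in $(\bar{b},\bar{c},\bar{d})$. Substituting everything into Lemma \ref{counting}.2 gives
\[
|\varphi(M,\bar{b})| \;=\; \frac{1}{n}\sum_{r,j} P^{(r)}_j(|D|)\cdot G^{(r)}_j(|D|) \;=:\; F(|D|),
\]
a rational polynomial of degree at most $R$ in $|D|$ (the division by $n$ is exactly what forces rational rather than integer coefficients in general). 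Only finitely many selection tuples $(P^{(r)}_j, G^{(r)}_j)_{(r,j)}$ can occur as $\bar{b}$ varies, so only finitely many $F_i$ arise. The accompanying $L$-formula $\pi_i(\bar{y},\bar{w})$ will existentially quantify $\bar{c}$ and assert both that $\bar{c}$ witnesses the hypothesis for $(\bar{b},\bar{d})$ -- an $L$-condition via Remark \ref{stratrem} together with the $L$-definability, for each standard $n$, of the uniform-fiber condition -- and that the selection tuple is the one producing $F_i$.

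The main obstacle is verifying that $F$ has degree \emph{exactly} $R$ rather than merely $\leq R$. Here Lemma \ref{stratlem}.2 is crucial: the set $\{d \in D : MR(X \cap \psi(M,d,\bar{c})) = R-1\}$ is cofinite in $D$. Because the $V^{(r)}_j$ are $L$-definable subsets of the strongly minimal set $D$ and partition $D$, exactly one of them is cofinite in $D$ while every other is finite of bounded standard size; by the preceding sentence that cofinite piece must lie at $r = R-1$, say $V^{(R-1)}_{j^\star}$. Its contribution $P^{(R-1)}_{j^\star}(|D|)\cdot G^{(R-1)}_{j^\star}(|D|)$ has degree $1 + (R-1) = R$, with positive leading coefficient by Lemma \ref{polynomeq}.3 applied inductively to $G^{(R-1)}_{j^\star}$ (which takes an infinite positive value on $|D|$) and to $|V^{(R-1)}_{j^\star}| = |D| - c$ for standard $c$. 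All other contributions have a standard-integer $P$-factor and a $G$-factor of degree $\leq R-1$, hence total degree $\leq R-1$, so they cannot cancel the top term. Thus $F$ has degree exactly $R$, completing the proof.
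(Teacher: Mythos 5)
Your proposal is correct and follows essentially the same route as the paper's proof: apply Lemma \ref{counting}.2 with $B^\star = n$ to the stratification relation, use the inductive hypothesis to count the $x$-fibers $\psi(M,d,\bar{c})$, use strong minimality of $D$ to count the $\rho$-classes in $D$, invoke Lemma \ref{stratlem}.2 to identify the cofinite class (degree $R$ contribution), and invoke Lemma \ref{polynomeq}.1 to conclude that different witnessing $\bar{c}$ yield the same polynomial. One small presentational note: the paper groups the finitely many ``selection tuples'' into a set $\Sigma_{N,m}$ and then explicitly quotients by $F_\sigma = F_{\sigma'}$ to build the defining formulas $\pi_{[\sigma]_\approx}$, whereas you gesture at this collapse a bit more briefly (``only finitely many $F_i$ arise''); if you want the $\pi_i$ to partition parameter space as Proposition \ref{onevarprop} requires, that grouping step should be made explicit. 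You also sensibly include the uniform-fiber condition in the $L$-formula for $\bar{c}$, which the paper leaves to its later application inside Lemma \ref{stratcase} where it holds automatically by construction.
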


\begin{proof}

By the inductive hypothesis, there are polynomials $G_1,\ldots,G_m \in \mathbb{Q}[x]$ of degree $< R$ and formulas $\rho_1(v,\bar{z},\bar{w}),\ldots,\rho_m(v,\bar{z},\bar{w})$ such that for all models $M \models T$, all $\bar{d} \in M^{|\bar{w}|}$ such that $D_{\bar{d}} = \theta(M,\bar{d})$ is strongly minimal, and all $(e,\bar{c}) \in M^{1+|\bar{z}|}$, 

\begin{itemize}
\item if $MR(\psi(M,e,\bar{c})) < R$ then $|\psi(M,e,\bar{c})| = G_i(|D_{\bar{d}}|)$ for some $i$, and 
\item each $\rho_i$ defines the set $\{(e,\bar{c},\bar{d}) : MR(\psi(M,e,\bar{c})) < R$ and $|\psi(M,e,\bar{c})| = G_i(|D_{\bar{d}}|)\}$, and 
\item if $M \models \rho_i(e,\bar{c},\bar{d})$ then $\deg G_i = MR(\psi(M,e,\bar{c}))$.
\end{itemize}

Let $N$ be the maximum of the $2m$ numbers given by $Num_T(\xi_i(v;\bar{z}\bar{w}))$ and $Num_T(\xi'_i(v;\bar{z}))$ for $i = 1, \ldots, m$, where $\xi_i(v;\bar{z} \bar{w})$ is $\theta(v,\bar{w}) \wedge \rho_i(v,\bar{z},\bar{w})$ and $\xi'_i(v;\bar{z} \bar{w})$ is $\theta(v,\bar{w}) \wedge \neg \rho_i(v,\bar{z},\bar{w})$. Then if $\theta(M,\bar{d})$ is strongly minimal, we have that for each $i = 1, \ldots, m$ and for all $\bar{c}' \in M^{|\bar{z}|}$, either $|\rho_i(D,\bar{c}')| < N$ or $|D - \rho_i(D,\bar{c}')| < N$.

Let $\bar{b} \in Mor_{\varphi, R}(M^{|\bar{y}|})$ and $\bar{d} \in M^{|\bar{w}|}$ such that $D_{\bar{d}} = \theta(M,\bar{d})$ is strongly minimal. By the assumptions of our lemma, there is $\bar{c} \in M^{|\bar{z}|}$ such that $\psi(x,v,\bar{c})$ is a proper stratification of $\varphi(M,\bar{b})$ with respect to $D_{\bar{d}}$ -- i.e. $M \models PrStrat_{\psi,\varphi,\theta}(\bar{b},\bar{c},\bar{d})$ (this formula is defined in the remarks before Lemma \ref{speciallem}).

If $\bar{c}$ is such that $M \models PrStrat_{\psi,\varphi,\theta}(\bar{b},\bar{c},\bar{d})$ then since $\psi(x,w,\bar{c})$ is a proper stratification of $\varphi(M,\bar{b})$, the $x$-fibers $\psi(M,e,\bar{c})$ have rank $< R$ for all $e \in D$. Therefore for each $e \in D_{\bar{d}}$ there is a unique $i$ such that $M \models \rho_i(e,\bar{c},\bar{d})$. So the sets $\rho_1(D_{\bar{d}},\bar{c}), \ldots, \rho_m(D_{\bar{d}},\bar{c})$ partition $D_{\bar{d}}$. Therefore all but one of these sets is finite of cardinality $\leq N$, and one of these sets is cofinite of pseudofinite cardinality at least $|D_{\bar{d}}| - N$ (in fact, exactly equal to $|D_{\bar{d}}| - K$ where $K$ is the sum of the remaining finite cardinalities, which must therefore be no greater than $N$).

Let $\Sigma_{N,m}$ be the finite set $\{(\sigma_1,\ldots,\sigma_m) \in \{0,1,\ldots,N,\infty\}^m : \sigma_i = \infty$ for exactly one $i$, and $\sum_{j \neq i} \sigma_j \leq N\}$. 

For $\bar{b} \in Mor_{\varphi, R}(M^{|\bar{y}|})$ and $\bar{c} \in PrStrat_{\psi,\varphi,\theta}(\bar{b},M^{|\bar{z}|},\bar{d})$, let $\sigma(\bar{b},\bar{c},\bar{d})$ be the tuple $(\sigma_1,\ldots,\sigma_m) \in \Sigma_{N,m}$ such that $\sigma_i = |\rho_i(D_{\bar{d}},\bar{c},\bar{d})|$ if this set is finite, and $\sigma_i = \infty$ if $i$ is the unique index such that $\rho_i(D_{\bar{d}},\bar{c},\bar{d})$ is infinite. Note that for any $\sigma \in \Sigma_{N,m}$, the set $\{(\bar{b},\bar{c},\bar{d}) : \bar{b} \in Mor_{\varphi, R}(M^{|\bar{y}|})$ and $\bar{c} \in PrStrat_{\psi,\varphi,\theta}(\bar{b},M^{|\bar{z}|},\bar{d})$ and $\sigma(\bar{b},\bar{c},\bar{d}) = \sigma\}$ is definable by a formula $\pi_\sigma(\bar{y},\bar{z},\bar{w})$ over $\emptyset$.

Suppose $\sigma(\bar{b},\bar{c},\bar{d}) = \sigma = (\sigma_1,\ldots,\sigma_m)$. Let $i$ be the unique index such that $\sigma_i = \infty$, and let $n_\sigma := \sum_{j \neq i} \sigma_j$. Noting that for $j \neq i$ the set $\rho_j(D_{\bar{d}},\bar{c},\bar{d})$ is the set $\{b \in D_{\bar{d}} : |\psi(M,b,\bar{c})| = G_j(|D_{\bar{d}}|)\}$, we obtain by Lemma \ref{counting} the equation \[|\varphi(M,\bar{b})| = \frac{G_i(|D_{\bar{d}}|) \cdot (|D_{\bar{d}}| - n_\sigma) + \sum_{j \neq i} G_j(|D_{\bar{d}}|) \cdot \sigma_j}{n}.\] Therefore $|\varphi(M,\bar{b})| = F_\sigma(|D_{\bar{d}}|)$, where \[F_\sigma(X) = \frac{G_i(X) \cdot (X - n_\sigma) + \sum_{j \neq i} G_j(X) \cdot \sigma_j}{n} \in \mathbb{Q}[X].\]

By induction, each $G_j(X)$ has degree $< R$. Moreover, $G_i(X)$ has degree exactly $R - 1$, which follows from Lemma \ref{stratlem}.2, since $i$ is the unique $i$ such that $\{b \in D_{\bar{d}} : |\psi(M,b,\bar{c})| = G_i(D_{\bar{d}})\}$ is cofinite. Therefore $F_\sigma(X)$ is a polynomial of degree $R$.

Suppose $\bar{c}'$ is another element of $PrStrat_{\psi,\varphi,\theta}(\bar{b},M^{|\bar{z}|},\bar{d})$. Let $\sigma' = \sigma(\bar{b},\bar{c}',\bar{d})$ with coordinates $(\sigma_1',\ldots,\sigma_m')$. Let $i'$ be the unique index such that $\sigma'_{i'} = \infty$ and let $n_{\sigma'} = \sum_{j \neq i'} \sigma_j$. Then as before we have  \[|\varphi(M,\bar{b})| = \frac{G_{i'}(|D_{\bar{d}}|) \cdot (|D_{\bar{d}}| - n_{\sigma'}) + \sum_{j \neq i'} G_j(|D_{\bar{d}}|) \cdot \sigma'_j}{n}.\]

Therefore the polynomial $F_{\sigma'}(x) = \frac{G_{i'}(x) \cdot (x - n_{\sigma'}) + \sum_{j \neq i'} G_j(x) \cdot \sigma'_j}{n}$ agrees with the polynomial $F_\sigma(x)$ when applied to the infinite hyper-integer $|D_{\bar{d}}|$. By Lemma \ref{polynomeq}.1, the polynomials $F_\sigma$ and $F_{\sigma'}$ are equal.

This shows that the choice of $\bar{c} \in PrStrat_{\psi,\varphi,\theta}(\bar{b},M^{|\bar{z}|},\bar{d})$ does not affect the polynomial $F_\sigma$. Formally, we can quotient $\Sigma_{N,m}$ by the equivalence relation $\approx$ where $\sigma \approx \sigma'$ if $F_{\sigma} = F_{\sigma'}$. Then for each $\approx$-equivalence class $[\sigma]_{\approx}$, we can define the set $\{\bar{b} \in Mor_{\varphi, R}(M^{|\bar{y}|}) : \sigma(\bar{b},\bar{c},\bar{d}) \approx \sigma$ for all $\bar{c} \in PrStrat_{\psi,\varphi,\theta}(\bar{b},M^{|\bar{z}|},\bar{d})\}$ by $\pi_{[\sigma]_\approx}(\bar{y},\bar{d})$ for an $L$-formula $\pi_{[\sigma]_\approx}(\bar{y},\bar{w})$ (as $\Sigma_{N,m}$ is finite, we may explicitly list out the tuples which are equivalent to $\sigma$).  Then Proposition \ref{onevarprop} holds for $R$ and $\varphi(x,\bar{y})$, witnessed by the polynomials $F_\sigma(X)$ and formulas $\pi_{[\sigma]_\approx}(\bar{y},\bar{w})$ as $[\sigma]_\approx$ ranges over the finitely many equivalence classes.
\end{proof}

Now we can remove the restriction that the $v$-fibers of $\psi(x,v,\bar{z})$ have a constant finite cardinality.

\begin{lemma}
\label{stratcase}
Suppose there is a formula $\psi(x,v,\bar{w})$ such that for all $M \models T$, all $\bar{b} \in M^{|\bar{y}|}$ and all $\bar{d} \in M^{|\bar{w}|}$ such that $D_{\bar{d}} = \theta(M,\bar{d})$ is strongly minimal, if $MR(\varphi(x,\bar{b})) = R$ then there is a $\bar{c} \in M^{|\bar{w}|}$ such that $\psi(x,v,\bar{c})$ is a proper stratification of $\varphi(M,\bar{b})$ with respect to $D_{\bar{d}}$. Then Proposition \ref{onevarprop} holds for $\varphi(x,\bar{y})$.
\end{lemma}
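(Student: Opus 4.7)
The plan is to decompose $\varphi(M,\bar{b})$ according to the pseudofinite sizes of the $x$-fibers of the given stratification $\psi$, apply Lemma~\ref{speciallem} or the inductive hypothesis to each piece, and sum. First I would fix $\bar{b} \in P(\varphi,R)$, $\bar{d}$ with $D_{\bar{d}}$ strongly minimal, and $\bar{c}$ witnessing the stratification hypothesis. Strong minimality of $D_{\bar{d}}$ supplies a uniform $N \in \omega$ (depending only on $\psi$ and $\theta$) such that for every $a \in M$ either $|\psi(a,D_{\bar{d}},\bar{c})| \leq N$ or $|D_{\bar{d}} \setminus \psi(a,D_{\bar{d}},\bar{c})| \leq N$. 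Introduce the $L$-formulas
\[
\varphi_k(x,\bar{y},\bar{z},\bar{w}) := \varphi(x,\bar{y}) \wedge \exists^{=k} v\,\bigl(\theta(v,\bar{w}) \wedge \psi(x,v,\bar{z})\bigr)
\]
for $k = 1,\ldots,N$, and $\varphi^{>N}(x,\bar{y},\bar{z},\bar{w})$ analogously with $\exists^{>N}$ in place of $\exists^{=k}$. Their instantiations at $(\bar{b},\bar{c},\bar{d})$ partition $\varphi(M,\bar{b})$ into pieces $X_1,\ldots,X_N,X^{>N}$; the would-be $k=0$ piece vanishes because $\psi$ actually stratifies $\varphi(M,\bar{b})$, and Lemma~\ref{stratlem}.1 gives $MR(X^{>N}) < R$.

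For each $k \geq 1$ the relation $\psi(x,v,\bar{z}) \wedge \varphi_k(x,\bar{y},\bar{z},\bar{w})$, instantiated at $(\bar{b},\bar{c},\bar{d})$, is a proper stratification of $X_k$ with respect to $D_{\bar{d}}$ whose $v$-fibers over $X_k$ all have size $k$. To feed this into Lemma~\ref{speciallem} I would pass to the diagonal-restricted formula
\[
\widetilde{\varphi}_k(x,\bar{y},\bar{z},\bar{w},\bar{w}') := \varphi_k(x,\bar{y},\bar{z},\bar{w}) \wedge (\bar{w} = \bar{w}'),
\]
with $\bar{w}'$ playing the role of the strongly-minimal-set parameter in Lemma~\ref{speciallem}: off the diagonal the formula defines $\emptyset$, making the hypothesis vacuous there, and on the diagonal it is exactly the stratification described above, with $n=k$. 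Lemma~\ref{speciallem} then produces polynomials of degree $R$ and $L$-formulas handling the rank-$R$ instances of $\widetilde{\varphi}_k$; the inductive hypothesis (Proposition~\ref{onevarprop} applied separately at each rank $r < R$) covers the lower-rank instances of $\widetilde{\varphi}_k$ as well as all instances of $\varphi^{>N}$.

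Summing yields $|\varphi(M,\bar{b})| = \sum_{k=1}^N |X_k| + |X^{>N}|$ as a rational polynomial $F$ evaluated at $|D_{\bar{d}}|$, determined by the combinatorial type of the decomposition — which sub-polynomial from each piece is in force. A priori $F$ depends on the auxiliary choice of $\bar{c}$, but the value $|\varphi(M,\bar{b})|$ does not, so Lemma~\ref{polynomeq}.1 forces $F$ to be $\bar{c}$-independent. I would obtain each $\pi_i(\bar{y},\bar{w})$ by existentially quantifying $\bar{z}$ out of the conjunction of the combinatorial-type formulas — this stays in $L$ because properness of stratification is $L$-definable by Remark~\ref{stratrem} — and then grouping types by the polynomial they produce, exactly as in the $\approx$-equivalence argument at the end of Lemma~\ref{speciallem}'s proof. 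The degree is exactly $R$ because at least one $X_k$ must have rank $R$ (otherwise $\varphi(M,\bar{b})$ would have rank $< R$), contributing a degree-$R$ summand with positive leading coefficient (Lemma~\ref{polynomeq}.3) that the lower-degree pieces cannot cancel.

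The main obstacle I anticipate is the parameter-matching issue just used: the parameter for $\theta$ sits \emph{inside} each $\varphi_k$, whereas Lemma~\ref{speciallem} treats its strongly-minimal-set parameter as something separate and universally quantified. The diagonal-restriction trick is the clean fix, but one must verify the uniform hypothesis of Lemma~\ref{speciallem} for $\widetilde{\varphi}_k$ by simultaneously handling the off-diagonal case (formula empty, hypothesis vacuous) and the on-diagonal rank-$R$ case (natural stratification), and then carry this diagnosis cleanly through the combinatorial-type bookkeeping without mixing the $L$- and $L^+$-definable ingredients.
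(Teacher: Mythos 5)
Your proposal is correct and follows essentially the same route as the paper: decompose $\varphi(M,\bar{b})$ by the finite fiber size $k$, handle the cofinite-fiber piece via Lemma~\ref{stratlem}.1 and the inductive hypothesis, handle each rank-$R$ constant-fiber piece via Lemma~\ref{speciallem}, sum, and quotient the combinatorial types by the induced polynomial using Lemma~\ref{polynomeq}.1. The one place where you go beyond the paper's write-up is the diagonal-restriction trick: the paper applies Lemma~\ref{speciallem} to $\varphi_i(x,\bar{y}\bar{z}\bar{w})$ and tacitly identifies the $\bar{w}$ already present in $\varphi_i$ with the $\theta$-parameter quantified in that lemma's hypotheses; your $\bar{w}=\bar{w}'$ device makes this identification explicit and the hypotheses of Lemma~\ref{speciallem} literally verifiable (vacuous off-diagonal, the natural stratification on-diagonal), which is a cleaner reading than the paper's but not a different argument.
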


\begin{proof}

Let $N$ be the maximum of $Num_T(\xi(v;x\bar{z}\bar{w}))$ and $Num_T(\xi'(v;x \bar{z}\bar{w}))$ (recall Definition \ref{numt}) where $\xi(v;x \bar{z}\bar{w})$ is $\theta(v,\bar{w}) \wedge \psi(x,v,\bar{z})$ and $\xi'(v;x \bar{z}\bar{w})$ is $\theta(v,\bar{w}) \wedge \neg \psi(x,v,\bar{z})$. Then if $D_{\bar{d}} := \theta(M,\bar{d})$ is strongly minimal, we have that for all $a',\bar{c}' \in M^{1+|\bar{z}|}$, either $|\psi(a,D_{\bar{d}},\bar{c}')| < N$ or $|D_{\bar{d}} - \psi(a,D_{\bar{d}},\bar{c}')| < N$.

For $1 \leq i \leq N-1$, let $\varphi_i(x,\bar{y},\bar{z},\bar{w})$ be the formula such that $\varphi_i(a,\bar{b},\bar{c},\bar{d})$ expresses ``$\varphi(a,\bar{b})$ and $ |\psi(a,D_{\bar{d}},\bar{c})| = i$''. Let $\varphi_0(x,\bar{y},\bar{z},\bar{w})$ be the formula such that $\varphi_0(a,\bar{b},\bar{c},\bar{d})$ expresses ``$\varphi_(a,\bar{b})$ and $|\psi(a,D_{\bar{d}},\bar{c})| > N$'', and note that $|\psi(a,D_{\bar{d}},\bar{c})| > N$ implies the set $\psi(a,D_{\bar{d}},\bar{c})$ is infinite, in fact cofinite, when $D_{\bar{d}}$ is strongly minimal. For $0 \leq i \leq N-1$ let $\psi_i(x,v,\bar{y},\bar{z},\bar{w})$ be $\psi(x,v,\bar{z}) \wedge \varphi_i(x,\bar{y},\bar{z},\bar{w})$.

Note that if $D_{\bar{d}}$ is strongly minimal and $\psi(x,w,\bar{c})$ is a stratification of $\varphi(M,\bar{b})$ with respect to $D_{\bar{d}}$, then the sets $\varphi_0(M,\bar{b},\bar{c},\bar{d}),\ldots,$ $ \varphi_N(M,\bar{b},\bar{c},\bar{d})$ partition $\varphi(M,\bar{b})$.

By induction and by Lemma \ref{speciallem}, for each $i$ there is an $s_i \in \omega$ and there are polynomials $G_{i,j}(X) \in \mathbb{Q}[X]$ and formulas $\pi_{i,j}(\bar{y},\bar{z},\bar{w})$ for $0 \leq i \leq N-1$ and $1 \leq j \leq s_i$ such that for each $i$ and all $\bar{b},\bar{c},\bar{d}$, if $D_{\bar{d}}$ is strongly minimal and either $MR(\varphi_i(M,\bar{b},\bar{c})) < R$ or $\varphi_i(M,\bar{b},\bar{c})$ is properly stratified by an instance of $\psi_i(x,v,\bar{y}\bar{z}\bar{w})$ with constant finite $D_{\bar{d}}$-fibers, then $|\varphi_i(M,\bar{b},\bar{c})| = G_{i,j}(|D_{\bar{d}}|)$ for some $j$, and for each $j$ the set of all such $(\bar{b},\bar{c}) \in M^{|\bar{y}| + |\bar{z}|}$ is defined by $\pi_{i,j}(\bar{y},\bar{z},\bar{d})$. To make notation easier, we can let all $s_i = s$ for some large enough $s$, by including additional (arbitrary) polynomials $G_{i,j}(X)$ and non-realized formulas $\pi_{i,j}(\bar{y},\bar{z},\bar{w})$ (for example, $y_1 \neq y_1)$ for $s_i < j \leq s$.

Suppose $D_{\bar{d}}$ is strongly minimal and $MR(\varphi(M,\bar{b})) = R$, and suppose $M \models PrStrat_{\psi,\varphi,\theta}(\bar{b},\bar{c},\bar{d})$, so that $\psi(x,v,\bar{c})$ is a stratification of $\varphi(M,\bar{b})$ with respect to $D_{\bar{d}}$. Then by Lemma \ref{stratlem}.1, $MR(\varphi_0(M,\bar{b},\bar{c},\bar{d})) < R$. If $MR(\varphi_i(M,\bar{b},\bar{c},\bar{d})) = R$, then \[\psi_i(x,w,\bar{b},\bar{c},\bar{d}) := \psi(x,w,\bar{c}) \wedge \varphi_i(x,\bar{b},\bar{c},\bar{d})\] is a proper stratification of $\varphi_i(M,\bar{b},\bar{c},\bar{d})$ with respect to $D_{\bar{d}}$, and the $w$-fibers $\psi_i(a,D_{\bar{d}},\bar{b},\bar{c},\bar{d})$ all have finite cardinality $i$, by the definition of $\varphi_i$. So in either case, the pseudofinite cardinality $|\varphi_i(M,\bar{b},\bar{c},\bar{d})|$ is $G_{i,j}(|D_{\bar{d}}|)$, where $j$ is the unique $j$ such that $M \models \pi_{i,j}(\bar{b},\bar{c},\bar{d})$.

Let $\Xi_{N,s}$ be the finite set of all tuples $\xi = (\xi_1,\ldots,\xi_N) \in \{1,\ldots,s\}^N$. For $\xi \in \Xi_{N,s}$, let $\pi_\xi(\bar{y},\bar{z},\bar{w})$ be the formula $\bigwedge_{0 \leq i \leq N} \pi_{i, \xi_i}(\bar{y},\bar{z},\bar{w})$. For $\bar{b} \in P_{\varphi,R}$ and $\bar{c} \in PrStrat_{\psi,\varphi,\theta}(\bar{b},M^{|\bar{z}|},\bar{d})$, let $\xi(\bar{b},\bar{c},\bar{d})$ be the tuple $\xi = (\xi_1,\ldots, \xi_N)$ such that $M \models \pi_{i,\xi(i)}(\bar{b},\bar{c},\bar{d})$ for each $i$ -- i.e., such that $M \models \pi_\xi(\bar{b},\bar{c},\bar{d})$.  Let $F_\xi(X) \in \mathbb{Q}[X]$ be the polynomial $\sum_{i=0}^N G_{i, \xi_i}(X)$.

When $D_{\bar{d}}$ is strongly minimal, the formulas $\{\pi_\xi(\bar{y},\bar{z},\bar{d}) : \xi \in \Xi_{N,s}\}$ partition \[\{(\bar{b},\bar{c}) \in M^{|\bar{y}|+|\bar{z}|} : MR(\varphi(M,\bar{b})) = R \mbox{ and } PrStrat_{\psi,\varphi,\theta}(\bar{b},\bar{c}, \bar{d})\}.\]    If $D_{\bar{d}}$ is strongly minimal and $M \models \pi_\xi(\bar{b},\bar{c},\bar{d})$ then the pseudofinite cardinality $|\varphi(M,\bar{b})|$ is $\sum_{i=0}^N |\varphi_i(M,\bar{b},\bar{c},\bar{d})|$ which equals $F_\xi(|D_{\bar{d}}|)$, since $|\varphi_i(M,\bar{b},\bar{c},\bar{d})| = G_{i,\xi(i)}(|D_{\bar{d}}|)$ for each $i$. We note that at least one $\varphi_i(M,\bar{b},\bar{c},\bar{d})$ must have full Morley rank (since this is a finite partition of $\varphi(M,\bar{b})$), and so at least one $G_{i,\xi(i)}(X)$ has degree $R$, hence $G_{\xi}(X)$ does as well.

As in the previous lemma, if $\bar{c}' \in M$ and $M \models PrStrat_{\psi,\varphi,\theta}(\bar{b},\bar{c}',\bar{d})$ and $\xi(\bar{b},\bar{c}',\bar{d}) = \xi'$, then as in the previous lemma we also have $|\varphi(M,\bar{b})| = F_{\xi'}(|D_{\bar{d}}|)$, so $F_{\xi}(x) = F_{\xi'}(x)$ as polynomials, again by Lemma \ref{polynomeq}.1. Then, as in that proof, we quotient $\Xi_{N,s}$ by the equivalence relation defined as $\xi \approx \xi'$ when $F_\xi = F_{\xi'}$, and taking our polynomials to be $F_{[\xi]_\approx}(X)$ and our defining formulas $\pi_{[\xi]_{\approx}}(\bar{y},\bar{w})$ to be ``for all $\bar{z}$, if $PrStrat_{\psi,\varphi,\theta}(\bar{y},\bar{z},\bar{w})$ holds then $\pi_{\xi'}(\bar{y},\bar{z},\bar{w})$ holds for some $\xi' \approx \xi$,'' which is definable since there are finitely many $\xi' \approx \xi$. 
\end{proof}

Now we can prove the full one-variable case.

\begin{proof}[Proof of Proposition \ref{onevarprop}]
By Corollary \ref{stratcor}, there are formulas\\ $\psi_1(x,v,\bar{z}),\ldots,\psi_s(x,v,\bar{z})$ such that for all $\bar{b} \in M^{|\bar{y}|}$ and all $\bar{d}$ such that $D_{\bar{d}}$ is strongly minimal, there is a unique $i \in \{1,\ldots,s\}$ such that for some $\bar{c} \in M^{|\bar{z}|}$, the formula $\psi_i(x,w,\bar{c})$ stratifies $\varphi(M,\bar{b})$ with respect to $D_{\bar{d}}$.  Apply Lemma \ref{stratcase} to each $\psi_i$ to obtain $s \in \omega$ and polynomials $G_{i,1}(X),\ldots,G_{i,s}(X) \in \mathbb{Q}[X]$ and formulas $\pi_{i,1}(\bar{y},\bar{w}),\ldots,\pi_{i,s}(\bar{y},\bar{w})$ for each $i$ as in the statement of Proposition \ref{onevarprop}. Taking our polynomials to be the polynomials $G_{i,j}(X)$ and our formulas to be ``$(\exists \bar{z} PrStrat_{\psi_i,\varphi,\theta}(\bar{y},\bar{z},\bar{w}))$ and $\pi_{i,j}(\bar{y},\bar{w})$'' proves the proposition.
\end{proof}

We can now the proof of the full main theorem.

\begin{proof}[Proof of Proposition \ref{main}]
The case $n = 1$ is proven in Proposition \ref{onevarprop}.

Let us assume the theorem is true for $n$ and let $\varphi(x_1,\ldots,x_n,x_{n+1},\bar{z})$ be a formula. By Proposition \ref{onevarprop}, there are $G_1(X),\ldots,G_s(X) \in \mathbb{Q}[X]$ of degree at most $N$ such that for all $\bar{a} \in M^n$, all $\bar{b} \in M^{|\bar{y}|}$ and all $\bar{d} \in M^{|\bar{w}|}$ such that $D_{\bar{d}}$ is strongly minimal, we have $|\varphi(M,\bar{a},\bar{b})| = G_i(|D_{\bar{d}}|)$ for some $i$, and for each $i$ the set of all $\bar{a}\bar{b}$ such that $|\varphi(M,\bar{a},\bar{b})| = G_i(|D_{\bar{d}}|)$ is definable by the formula $\pi_i(x_2,\ldots,x_{n+1},\bar{y},\bar{d})$. 

By induction, for each $i$ there are polynomials $H_{i,1}(X),\ldots,H_{i,m}(X) \in \mathbb{Q}[X]$ and formulas $\tau_{i,1}(\bar{y},\bar{w}), \ldots, \tau_{i,m}(\bar{y},\bar{w})$ such that for all $\bar{b} \in M^{|\bar{y}|}$ and $\bar{d} \in M^{|\bar{w}|}$ with $D_{\bar{d}}$ strongly minimal, there is a $j \in \{1,\ldots,m\}$ such that $|\pi_i(M^n,\bar{b},\bar{d})| = H_{i,j}(|D_{\bar{d}}|)$, and $deg H_{i,j}(X) = MR(\pi(\bar{x},\bar{b},\bar{d}))$, and for all $j$ the formula $\tau_{i,j}(\bar{y},\bar{d})$ defines the set of all $\bar{b}$ such that $|\pi_i(M^n,\bar{b},\bar{d})| = H_{i,j}(|D_{\bar{d}}|)$. (As in the proofs above, we may not have the same number of polynomials for each $\pi_i$, but we may add additional unused polynomials to get a uniform number).

For every tuple $\sigma = (\sigma_1,\ldots,\sigma_s) \in \{1,\ldots,m\}^s$, let $\tau_\sigma(\bar{z},\bar{w})$ be the formula $\bigwedge_i \tau_{i,\sigma_i}(\bar{z},\bar{w})$. Note that the formulas when $D_{\bar{d}}$ is strongly minimal the formulas $\tau_\sigma(\bar{z},\bar{d})$ partition $M^{|\bar{z}|}$ as $\sigma$ ranges over $\{1,\ldots,m\}^s$ (with some empty sets in this partition, if any of the polynomials $H_{i,j}$ is unattained). 

If $\sigma = (\sigma_1, \ldots, \sigma_s)$, $M \models \tau_\sigma(\bar{c})$, and $D_{\bar{d}}$ is strongly minimal, then \\ $|\pi_i(M^{|\bar{y}|}, \bar{b},\bar{d})| = H_{i,\sigma_i}(|D_{\bar{d}}|)$ for each $i$. By Lemma \ref{counting}.1, we have that \[|\varphi(M^{n+1},\bar{b})| = \sum_{i=1}^s F_i(|D_{\bar{d}}|) \cdot H_{i,\sigma_i}(|D_{\bar{d}}|) = F_\sigma(|D_{\bar{d}}|),\] where $F_\sigma(X)$ is the polynomial $\sum_{i=1}^s G_i(X) \cdot H_{i,\sigma_i}(X)$. We note that this equation comes from the fact that $\varphi(M^{n+1},\bar{b})$ is the disjoint union of the sets $Z_i := \{(c,\bar{a},\bar{b}) : M \models \pi_i(\bar{a},\bar{b})$ and $M \models \varphi(c,\bar{a},\bar{b})\}$ for $i = 1, \ldots, s$, each of which has cardinality $G_i(|D_{\bar{d}}|) \cdot H_{i, \sigma_i}(|D_{\bar{d}}|)$. The fibers $\{c \in M : (c,\bar{a}) \in Z_i\}$ all have pseudofinite cardinality $G_i(|D_{\bar{d}}|)$ by the definition of $Z_i$, hence they all have the same Morley rank which is $\deg G_i(X)$. Therefore by Lemma \ref{surj}, taking the definable surjection $f$ to be the projection of $Z_i$ onto the coordinates $(x_2,\ldots,x_n)$, the Morley rank of $Z_i$ is $\deg G_i + MR(\pi_i(M^n,\bar{b},\bar{d})) = \deg G_i(X) + \deg H_{i, \sigma_i}(X)$, which is the degree of the polynomial $G_i(X) \cdot H_{i,\sigma_i}(X)$. So $\deg F_\sigma(X) = \max (\deg G_i (X) \cdot H_{i, \sigma_i}(X) : i \leq s) = \max(MR(Z_i) : i \leq s) = MR(\varphi(M^{n+1},\bar{b}))$.
\end{proof}

\section{Additional Results}

We illustrate that our counting polynomials may need to have strictly rational coefficients, as opposed to the strongly minimal case, where the coefficients are integer-valued.

\begin{example}
\label{rationalex}
Let $L$ contain unary predicates $P_0,P_1$ and a binary relation $R$.

Let $T$ be the complete theory axiomatized by 

\begin{itemize}
\item $P_0$ and $P_1$ partition the universe $M$ into infinite disjoint sets

\item For all $x,y \in M$ if $R(x,y)$ holds then $x \in P_0$ and $y \in P_1$.

\item For all $x \in P_0$ there are exactly three $y \in P_1$ such that $R(x,y)$ holds.

\item For all $y \in P_1$ there are exactly two $x \in P_0$ such that $R(x,y)$ holds.

\item For all $x, x' \in P_0$ and $y, y' \in P_1$, if $R(x,y)$ and $R(x',y)$ and $R(x,y')$ hold then $R(x',y')$ holds.
\end{itemize}

The reduct of any model $M$ of $T$ to $\{R\}$ looks like an infinite disjoint union of complete two-to-three bipartite digraph, directed versions of the complete bipartite graph $K_{2,3}$, with arrows going from the side with two elements to the side with three elements. The predicate $P_0$ picks out the source nodes and the predicate $P_1$ picks out the target nodes. This theory is totally categorical. In any model $M$ of $T$, the sets $P_0(M)$ and $P_1(M)$ are strongly minimal. $T$ is a pseudofinite theory which may be satisfied by any ultraproduct of $M_1, M_2, M_3, \ldots$ where $M_n$ is the disjoint union of $n$ copies of the complete two-to-three bipartite digraph. Then $|M_n| = 5n$ and $|P_0(M_n)| = 2n$ and $|P_1(M_n)| = 3n$. It follows that in any infinite ultraproduct $M = \prod_{n \to \mathcal{U}} M_n$ we have $|P_0(M)| = \frac{2}{3} |P_1(M)|$, and $|P_1(M)| = \frac{3}{2} |P_0(M)|$. This shows that the polynomials in the theorem may be required to have strictly rational coefficients.

\end{example}

In our next result, we show that when a pseudofinite ultraproduct $M$ has uncountably categorical theory, then the two-sorted structure $M^+$ is uncomplicated in a model-theoretic sense. To formulate this result, we need the notion of a \emph{disjoint union} structure. Suppoe $L_1$ and $L_2$ are disjoint languages, and let $M_1$ be an $L_1$-structure and $M_2$ an $L_2$-structure. The disjoint union of $M_1$ and $M_2$ is a two-sorted structure. One sort is for $M_1$ and has the language $L_1$, and the other sort is for $M_2$ in the language $L_2$. Each sort inherits the full structure of the $M_i$, and there is no defined interaction between the two sorts. The disjoint union is, in a sense, the least model-theoretically complicated way of joining the structures $M_1$ and $M_2$. In this next proposition, we show that the counting structure $M^+$ is definable in the disjoint union of $M$ and the real closed field $\mathbb{R}^\star$.

\begin{proposition}
\label{disjointunion}
Let $(M_\lambda : \lambda \in \Lambda)$ be a sequence of finite $L$-structures and let $\mathcal{U}$ be an ultrafilter on $\Lambda$ such that $M := \prod_{\lambda \to \mathcal{U}} M_\lambda$ is $\aleph_1$-categorical. Let $M^+$ be the $L^+$-expansion of $M$ with respect to $(M_\lambda : \lambda \in \Lambda)$ and $\mathcal{U}$. Then $M^+$ is definable over a singleton $c \in \mathbf{OF}$ and a tuple $\bar{d} \in M$ in the two-sorted disjoint union structure of $M$ and $\mathbb{R}^\mathcal{U}$, where $\bar{d}$ is such that $\theta(M,\bar{d})$ is strongly minimal for an $L$-formula $\theta(v,\bar{w})$ and where $c$ is the pseudofinite cardinality of any infinite definable set $X \subseteq M^m$ (for any arity $m$).
\end{proposition}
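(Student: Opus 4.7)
The plan is to reduce the whole problem to the Main Theorem. Since the disjoint union of $M$ and $\mathbb{R}^\star$ automatically carries the full $L$-structure on the home sort $\mathbf{H}$ and the ordered-field structure on $\mathbf{OF}$, it suffices to produce, for each $L$-formula $\varphi(\bar{x},\bar{y})$, a formula in the disjoint union (over parameters $c,\bar{d}$) that defines the pseudofinite-cardinality function $f_{\varphi(\bar{x},\bar{y})}$. First I would handle the special case $c = q$, where $q := |\theta(M,\bar{d})|$. Applying Theorem \ref{main} to $\varphi$ produces pairwise distinct polynomials $F_1,\ldots,F_r \in \mathbb{Q}[X]$ and $L$-formulas $\pi_1(\bar{y},\bar{w}),\ldots,\pi_r(\bar{y},\bar{w})$ whose instantiations $\pi_i(\bar{y},\bar{d})$ partition $M^{|\bar{y}|}$; the partition property follows from Lemma \ref{polynomeq}.1, which guarantees that distinct polynomials take distinct values at the infinite positive $q$. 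My proposed defining formula is then
\[
f_{\varphi(\bar{x},\bar{y})}(\bar{b}) = z \;\Longleftrightarrow\; \bigvee_{i=1}^r \bigl(\pi_i(\bar{b},\bar{d}) \wedge z = F_i(q)\bigr).
\]

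The key point is that this formula respects the sort boundary: $\pi_i(\bar{b},\bar{d})$ is a pure $\mathbf{H}$-formula in $\bar{b}$ over the parameter $\bar{d}$, while $z = F_i(q)$ is a pure $\mathbf{OF}$-formula in $z$ over the parameter $q$, because each $F_i$ has rational coefficients and $\mathbb{Q}$ is $\emptyset$-definable in every ordered field. No cross-sort symbol is invoked, so this is a legitimate definition in the disjoint union, and its correctness is exactly the content of the Main Theorem.

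To extend from the case $c = q$ to arbitrary $c = |X|$ for an infinite definable $X \subseteq M^m$, I would recover $q$ from $c$ inside $\mathbf{OF}$ and substitute into the formula above. By the Main Theorem, $|X| = F_X(q)$ for some $F_X \in \mathbb{Q}[X]$ of degree $MR(X) \geq 1$, since infinite definable sets have positive Morley rank. A polynomial of positive degree over $\mathbb{R}$ is strictly monotone on some ray $(M_X,\infty)$ with $M_X \in \mathbb{Q}$ (take $M_X$ past all real roots of $F_X'$), and this fact transfers to $\mathbb{R}^\star$; hence $q$ is the unique $y \in \mathbf{OF}$ with $y > M_X$ and $F_X(y) = c$, giving a $\{c\}$-definition of $q$ in $\mathbf{OF}$. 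I expect the main obstacle to be mostly bookkeeping rather than a deep technical step: verifying that the $F_i$'s may be taken pairwise distinct (by merging those that coincide), that the $\pi_i$'s are correspondingly mutually exclusive, and that the rational threshold $M_X$ is a concrete first-order constant --- each of which is immediate from Lemma \ref{polynomeq} together with the definability built into the conclusion of the Main Theorem.
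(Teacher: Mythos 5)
Your proposal is correct and follows essentially the same route as the paper's own proof: apply Theorem \ref{main} to obtain the polynomials $F_i$ and partitioning formulas $\pi_i$, define $f_{\varphi(\bar x,\bar y)}$ via the disjunction $\bigvee_i[\pi_i(\bar b,\bar d)\wedge z=F_i(q)]$, and recover $q=|D|$ from $c=|X|$ by inverting a polynomial (the paper invokes Lemma \ref{polynomeq}.2, where you unfold the monotone-on-a-ray argument explicitly). The only nitpick is the aside that ``$\mathbb{Q}$ is $\emptyset$-definable in every ordered field'': the set $\mathbb{Q}$ is not definable in $\mathbb{R}^\star$, but each individual rational constant is, which is all your formula actually uses.
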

\begin{proof}
Let $D = \theta(M,\bar{d})$ be a definable strongly minimal set. Then $|X| = F(|D|)$ for some polynomial $F(X) \in \mathbb{Q}[x]$, by Theorem \ref{main}. Since $|X|$ and $|D|$ are both non-standard integers, the hyperreal $|D|$ is the unique positive hyperreal $z$ such that $F(z) = |X|$, by Lemma \ref{polynomeq}.2. So $|D|$ is definable over $|X|$ in $\mathbb{R}^\mathcal{U}$.  Therefore to prove our theorem, it suffices to let $c = |D|$, and show that $M^+$ is definable in the disjoint union $L^+$-structure $M \cup \mathbb{R}^\mathcal{U}$ over $c  \in \mathbb{R}^\star$ and $\bar{d} \in M$.

To show that $M^+$ is definable in the disjoint union, we show that for every $L$-formula $\varphi(\bar{x},\bar{y})$, the function $f_{\varphi(\bar{x},\bar{y})}$ is definble over $c = |D| \in \mathbb{R}^\star$ and $\bar{d} \in M$. Let $\varphi(\bar{x},\bar{y})$ be an $L$-formula. By Theorem \ref{main}, there are finitely many polynomials $F_1(X),\ldots,F_r(X) \in \mathbb{Q}[X]$ and formulas $\pi_1(\bar{y},\bar{w}),\ldots,\pi_r(\bar{y},\bar{w})$ such that the formulas $\pi_i(\bar{y},\bar{d})$ partition $M^{|\bar{y}|}$ and for every $\bar{b}$ we have $|\varphi(M,\bar{b})| = F_i(|D|)$ iff $M \models \pi_i(\bar{b})$. Then for any $\bar{b} \in M^{|\bar{y}|}$ and any $e \in \mathbb{R}^\mathcal{U}$, \[M^+ \models f_{\varphi(\bar{x},\bar{y})}(\bar{b}) = e\] if and only if \[(M,\mathbb{R}^\mathcal{U}) \models \bigvee_{i=1}^r [\pi_i(\bar{b},\bar{d}) \wedge e = F_i(c)]\] which is definable in the disjoint union.
\end{proof}

In our final two propositions, we will demonstrate how Theorem \ref{main} passes down to give information about cardinalities of sets in finite structures. These propositions will be about particular kinds of families of sets defined as follows.

\begin{definition}
Let $L$ be a language and let $(M_\lambda : \lambda \in \Lambda)$ be a sequence of $L$-structures. We say that $(M_\lambda : \lambda \in \Lambda)$ has a \emph{zero-one law} if for every $L$-sentence $\varphi$, either $M_\lambda \models \varphi$ for all-but-finitely many $\lambda$ or $M_\lambda \models \neg \varphi$ for all-but-finitely many $\lambda$.

Equivalently, $(M_\lambda : \lambda \in \Lambda)$ has a zero-one law if any two non-principal ultraproducts of the family are elementarily equivalent.

We also say that $(M_\lambda : \lambda \in \Lambda)$ is a \emph{zero-one class}.

If $(M_\lambda : \lambda \in \Lambda)$ is a zero-one clas, we call the theory of any/all non-principal ultraproducts of the family the \emph{limit theory}.
\end{definition}

Let us also recall the little-o notation ``$f = o(g)$'', which is an abbreviation for the statement \[\lim_{x \to \infty} \frac{f(x)}{g(x)} = 0.\]

First we show that a zero-one class with an uncountably categorical limit theory is an $R$-mec, a notion devised by Anscombe, Macpherson, Steinhorn and Wolf to appear in their upcoming paper \cite{amsw}, and explored in detail in Wolf's thesis \cite{wolf}.

\begin{definition}
\label{rmecdef}
Let $\mathcal{C}$ be a class of finite $L$-structures. Let $R$ be a set of functions from $\mathcal{C}$ to $\mathbb{R}^{\geq 0}$. Then $\mathcal{C}$ is an \emph{$R$-mec} if for every $L$-formula $\varphi(\bar{x},\bar{y})$ there are finitely many $h_1(X),\ldots,h_n(X) \in R$ and $L$-formulas $\psi_1(\bar{y}),\ldots,\psi_n(\bar{y})$ such that for each $M \in \mathcal{C}$,

 \begin{itemize}
 \item for each $\bar{b} \in M^{|\bar{y}|}$, there is an $i$ such that $|\varphi(M,\bar{b})| = h_i(M)$, and
 
 \item for each $i$ the formula $\psi_i(\bar{y})$ defines the set $\{\bar{b} \in M^{|\bar{y}|} : |\varphi(M,\bar{b})| = h_i(M)\}$.
 \end{itemize}
 \end{definition}

The word ``mec'' is short for ``multidimensional exact class''. It is a special case of the more general notion of multidimensional asymptotic class, introduced in \cite{amsw} and explored in \cite{wolf} as $R$-mecs are, the definition of which is similar to the above definition except instead of stipulating that $|\varphi(M,\bar{b})| = h_i(M)$, we stipulate that the two are ``asymptotically'' equal, in the sense that the quotient $\frac{|\varphi(M,\bar{b})| - h_i(M)}{h_i(M)}$ tends to zero as $h_i(M)$ goes to infinity. That is, $|\varphi(M,\bar{b})| - h_i(M) = o(h_i(M))$, as $M$ ranges over $\mathcal{C}$ and $\bar{b}$ ranges over $\psi_i(M^{\bar{y}|})$. We note that $h_i$ takes as input the structure $M$, not the number $|M|$.

In this proposition, we prove that a class of finite structures whose ultraproduct theory is uncountably categorical is an $R$-mec for a particularly simple class of functions $R$.

\begin{proposition}
\label{rmec}
Let $T$ be an uncountably categorical pseudofinite theory. Suppose $(M_\lambda : \lambda \in \Lambda)$ is a sequence of finite $L$-structures with a zero-one law and limit theory $T$. Let $\theta(v,\bar{w})$ be a formula such that $\theta(M,\bar{d})$ is strongly minimal for some $M \models T$ and $\bar{d} \in M$. Then for every formula $\varphi(\bar{x},\bar{y})$, there are polynomials $F_1,\ldots,F_r \in \mathbb{Q}[x]$ and formulas $\pi_1(\bar{y},\bar{w}),\ldots,\pi_r(\bar{y},\bar{w})$ such that in all finite structures $M_\lambda$ there is a tuple $\bar{d}_\lambda \in M_\lambda^{|\bar{w}|}$ such that for all $\bar{b} \in M_\lambda^{|\bar{y}|}$, there is $i \in \{1,\ldots,r\}$ such that $|\varphi(M_\lambda^{|\bar{x}|}, \bar{b})| = F_i(|\theta(M_\lambda,\bar{d}_\lambda)|)$, and for each $i$ the set $\pi_i(M_\lambda^{|\bar{y}|},\bar{d}_\lambda)$ is the set of all $\bar{b} \in M_n^{|\bar{y}|}$ for which this equation holds.

In particular, the class $(M_\lambda : \lambda \in \Lambda)$ is an $R$-mec, where $R$ is the set of functions defined by \[f(M_\lambda) = F(|\theta(M_\lambda,\bar{d}_\lambda)|)\] as $F(x)$ ranges over $\mathbb{Q}[x]$ and $\bar{d}_\lambda$ ranges over elements of $M_\lambda$.
\end{proposition}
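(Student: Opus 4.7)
The plan is to apply Theorem \ref{main} inside any non-principal ultraproduct of the class, transfer the resulting counting identity down to cofinitely many finite $M_\lambda$ via \L o\'s's theorem for $L^+$, and then patch up the finitely many exceptional structures by case analysis on their isomorphism type.

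Concretely, fix any non-principal ultrafilter $\mathcal{U}$ on $\Lambda$ and set $M = \prod_{\lambda \to \mathcal{U}} M_\lambda$. By the zero-one law $M \models T$, and by hypothesis some $\bar{d} \in M^{|\bar{w}|}$ makes $\theta(M,\bar{d})$ strongly minimal, so Theorem \ref{main} supplies polynomials $F_1,\ldots,F_r \in \mathbb{Q}[X]$ and $L$-formulas $\pi_1(\bar{y},\bar{w}),\ldots,\pi_r(\bar{y},\bar{w})$. I would then encode the conclusion as the single $L^+$-sentence
\[
\Phi \;\equiv\; \exists \bar{w}\,\Bigl[\chi(\bar{w}) \,\wedge\, \forall \bar{y}\, \bigwedge_{i=1}^{r}\bigl(\pi_i(\bar{y},\bar{w}) \leftrightarrow f_\varphi(\bar{y}) = F_i(f_\theta(\bar{w}))\bigr)\Bigr],
\]
where $\chi(\bar{w})$ is the $L$-formula (from Fact \ref{rankfact}.2) asserting that $\theta(v,\bar{w})$ is strongly minimal in $T$-models; this is well-formed in $L^+$ because each polynomial equation lives in the ordered-field sort built from $f_\varphi, f_\theta$ and terms over $\mathbb{Q}$. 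Since $M^+ \models \Phi$ by Theorem \ref{main} and the argument is independent of $\mathcal{U}$, \L o\'s's theorem for $L^+$ forces $\Lambda_{\mathrm{good}} := \{\lambda \in \Lambda : M_\lambda^+ \models \Phi\}$ to be cofinite in $\Lambda$; on $\Lambda_{\mathrm{good}}$ the existential witness inside $\Phi$ supplies the required $\bar{d}_\lambda$.

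To handle the finitely many exceptional $\lambda$, I would fix any $\bar{d}_\lambda$, compute the specific integers $v_{i,\lambda} := F_i(|\theta(M_\lambda,\bar{d}_\lambda)|)$, and exploit the observation that each exceptional $M_\lambda$ --- being a finite structure in the finite sublanguage of $L$ generated by the symbols occurring in $\varphi, \theta$ and $\pi_1, \ldots, \pi_r$ --- is characterized up to isomorphism of that reduct by a single $L$-sentence $\sigma_\lambda$. Replacing each $\pi_i$ by
\[
\pi_i'(\bar{y},\bar{w}) \;:=\; \bigl(\neg\sigma_{\mathrm{except}} \wedge \pi_i(\bar{y},\bar{w})\bigr) \;\vee\; \bigvee_{\lambda \notin \Lambda_{\mathrm{good}}}\bigl(\sigma_\lambda \wedge \exists^{=v_{i,\lambda}}\bar{x}\,\varphi(\bar{x},\bar{y})\bigr),
\]
with $\sigma_{\mathrm{except}} := \bigvee_{\lambda \notin \Lambda_{\mathrm{good}}} \sigma_\lambda$, restores the ``iff'' clause on all of $\Lambda$. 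For cardinality values $v$ realized on exceptional structures but unmatched by any $v_{i,\lambda}$, adjoin the constant polynomial $F_v(X) := v$ paired with $\sigma_{\mathrm{except}} \wedge \exists^{=v}\bar{x}\,\varphi(\bar{x},\bar{y})$. The $R$-mec conclusion then falls out of Definition \ref{rmecdef} by setting $h_i(M_\lambda) := F_i(|\theta(M_\lambda,\bar{d}_\lambda)|)$.

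The principal obstacle I anticipate is exactly this reconciliation between the two regimes: \L o\'s's theorem provides the biconditional clause of $\Phi$ only on $\Lambda_{\mathrm{good}}$, so the original Theorem \ref{main} formulas may misclassify some $\bar{b}$ on exceptional structures. The case-analysis surgery above sidesteps this via the first-order axiomatizability (in a suitable finite sublanguage) of the finitely many exceptional isomorphism types, keeping the patched list of polynomials and formulas finite and uniformly $L$-expressible.
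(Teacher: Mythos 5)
Your strategy is essentially the paper's: encode the desired conclusion as a single $L^+$-sentence, note that it holds in every non-principal ultraproduct of the family (because Theorem~\ref{main} applies to all pseudofinite ultraproducts), transfer it to cofinitely many $M_\lambda^+$ by the zero-one law, and repair the finitely many remaining structures. The paper's sentence $\Psi(\bar{w})$ carries the same content as your $\Phi$, so the core of the argument matches.

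Two points, though. First, $\Phi$ as written only asserts the biconditionals $\pi_i(\bar{y},\bar{w}) \leftrightarrow f_\varphi(\bar{y}) = F_i(f_\theta(\bar{w}))$; it does not assert that for every $\bar{y}$ at least one of these right-hand sides holds. In the ultraproduct the right-hand side is true for some $i$ by Theorem~\ref{main}, but a finite $M_\lambda^+$ could satisfy all the biconditionals vacuously for some $\bar{b}$ and fail the ``there is an $i$ such that $|\varphi(M_\lambda,\bar{b})| = F_i(\ldots)$'' clause of the proposition. You should add the conjunct $\forall\bar{y}\,\bigvee_i \pi_i(\bar{y},\bar{w})$ (the paper expresses this by requiring the $\pi_i$ to partition $M^{|\bar{y}|}$). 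Second, your patching via the isomorphism-type sentences $\sigma_\lambda$ is more elaborate than necessary and has a hidden wrinkle: $\sigma_\lambda$ only pins down $M_\lambda$ up to isomorphism in the finite sublanguage $L_0$, so a ``good'' $M_\mu$ could satisfy $\sigma_\lambda$. Then $\pi_i'(M_\mu,\bar{d}_\mu)$ would be governed by the counts $v_{i,\lambda}$ computed from whatever $\bar{d}_\lambda$ you happened to pick in $M_\lambda$, while the proposition demands the set where $|\varphi(M_\mu,\bar{b})| = F_i(|\theta(M_\mu,\bar{d}_\mu)|)$; these can disagree if $|\theta(M_\mu,\bar{d}_\mu)| \neq |\theta(M_\lambda,\bar{d}_\lambda)|$, which the $L_0$-isomorphism does not rule out for your chosen $\bar{d}_\mu$. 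The paper sidesteps this by taking $N$ larger than the sizes of all exceptional structures, adding constant polynomials $0,1,\ldots,N-1$ with formulas expressing the exact finite count, and restricting the original $\pi_i$ to cardinalities $\geq N$; this avoids any reference to particular exceptional isomorphism types. If you want to keep your approach, you would need to make $\sigma_\lambda$ also constrain which witness $\bar{d}$ is used, or more simply include ``$|M| = |M_\lambda|$'' so that any structure satisfying $\sigma_\lambda$ is small and then switch entirely to exact-count formulas there, as the paper does.
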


\begin{proof}
Let $\theta(v,\bar{w})$ be an $L$-formula such that some/every (saturated) model $M$ of the theory $T$ has a $\bar{d} \in M$ such that $\theta(M,\bar{d})$ is strongly minimal. 

Let $\varphi(\bar{x},\bar{y})$ be an $L$-formula. Let $F_1(X),\ldots,F_r(X) \in \mathbb{Q}[X]$ be the polynomials and $\pi_1(\bar{y},\bar{w}),\ldots,\pi_r(\bar{y},\bar{w})$ be the formulas obtained in Theorem \ref{main} as applied to $\varphi(\bar{x},\bar{y})$ and $\theta(v,\bar{w})$. 

Let $\Psi(\bar{w})$ be an $L^+$-formula such that $\Psi(\bar{d})$ expresses the conjunction of the two conditions

\begin{enumerate}

\item $\pi_1(\bar{y},\bar{d}),\ldots,\pi_r(\bar{y},\bar{d})$ partition $M^{|\bar{y}|}$, and

\item for all $\bar{b} \in M^{|\bar{y}|}$ we have $|\varphi(M^{|\bar{x}|},\bar{b})| = F_i(|\theta(M,\bar{d})|)$ if and only if $M \models \pi_i(\bar{b},\bar{d})$.

\end{enumerate}

Then for every $M \models T$ and $\bar{d} \in M$, if $\theta(M,\bar{d})$ is strongly minimal then $M^+ \models \Psi(\bar{d})$. Since every uncountable model of $T$ is saturated, every uncountable model of $T$ contains a tuple $\bar{d}$ such that $\theta(M,\bar{d})$ is strongly minimal. Therefore whenever $M$ is a pseudofinite ultraproduct which satisfies $T$, the expanded structure $M^+$ satisfies the $L^+$-sentence $\exists \bar{w} \Psi(\bar{w})$. 

By assumption, every nonprincipal ultraproduct of the family $(M_\lambda : \lambda \in \Lambda)$ satisfies the theory $T$. Therefore every infinite ultraproduct of the expanded finite structures $M^+_\lambda$ satisfies the sentence $\exists \bar{w} \Psi(\bar{w})$. Hence this sentence is satisfied in all but finitely many of the $L^+$-expansions $M^+_\lambda$. If $M^+_\lambda$ satisfies $\exists \bar{w} \Psi(\bar{w})$ then, taking $\bar{d}_\lambda$ to be a witness, the structure $M^+_\lambda$ satisfies the conclusion of the proposition.

Let $M_1, \ldots, M_k$ be the finitely many structures in our family whose expansions $M_i^+$ do not satisfy the $L^+$-formula $\exists \bar{w} \Psi(\bar{w})$. Let $N$ be a number greater than $\max \{ |M_1|, \ldots, |M_k|\}$. Let us add polynomials $F_{r+1}(X),\ldots,F_{r+N}(X)$ where $F_{r+k}(X)$ is the constant polynomial $k-1$, and add new formulas $\pi_{r+1}(\bar{y},\bar{w}),\ldots,$ $\pi_{r+N}(\bar{y},\bar{w})$ where $\pi_{r+k}(\bar{b},\bar{d})$ expresses ``$|\varphi(M,\bar{b})| = k-1$''. Finally, let us modify the formulas $\pi_1(\bar{y},\bar{w}), \ldots, \pi_r(\bar{y},\bar{w})$ by conjoining them each with a formula such that $\pi_i(\bar{b},\bar{d})$ implies ``$|\varphi(M,\bar{b})| \geq N$''. With this alteration, the conclusion of the proposition holds for every structure $M_\lambda$.
\end{proof}

The following notion was introduced by Elwes in \cite{elwes}. 

\begin{definition}
\label{ndimdef}
A family of finite $L$-structures $(M_\lambda : \lambda \in \Lambda)$ is a \emph{N-dimensional asymptotic class} if for every $L$-formula $\varphi(\bar{x},\bar{y})$, there exist finitely many pairs $(\mu_1,d_1),\ldots,(\mu_s,d_s) \in \mathbb{R}^{\geq 0} \times \omega$ and $L$-formulas $\pi_1(\bar{y}),\ldots\pi_s(\bar{y})$ over $\emptyset$ such that for every $\lambda \in M_\lambda$, the following two conditions hold:

\begin{itemize}
\item The formulas $\pi_1(\bar{y}),\ldots,\pi_s(\bar{y})$ partition $M_\lambda^{|\bar{y}|}$,

\item For $i = 1, \ldots, s$, we have that $|\varphi(M_\lambda^{|\bar{x}|}, \bar{b})| - \mu_i |M_\lambda|^{d_i / N} = o(|M_\lambda|^{d_i/N})$ as $|M_\lambda| \to \infty$, for $\bar{b} \in \pi_i(M_\lambda^{|\bar{y}|})$.
\end{itemize}

Clause 2 may be restated as: for every $\epsilon > 0$ there is a number $C$ such that whenever $|M_\lambda| > C$ and $\bar{b} \in \pi_i(M_\lambda^{|\bar{y}|})$ we have \[\big{|}|\varphi(M_\lambda^{|\bar{x}|}, \bar{b}) - \mu_i |M_\lambda|^{d_i / N}\big{|} < \epsilon |M_\lambda|^{d_i / N}.\]

\end{definition}

We prove that a zero-one family of finite structures whose limit theory is uncountably categorical is an $N$-dimensional asymptotic class, where $N$ is the Morley rank of the limit theory. As the definition of an $N$-dimensional asymptotic class requires that the formulas $\pi_i$ be definable without parameters, while the conclusion of Proposition \ref{rmec} allows for parameters, we shall need the following fact, which for example may be found as a consequence of \cite[6.1.16]{marker}.

\begin{fact}
\label{isosm}
Let $T$ be an uncountably categorical theory. Then every model of $T$ contains a strongly minimal set defined by a formula whose parameters have an isolated type. That is, there exist formulas $\kappa(\bar{w})$ and $\theta(v,\bar{w})$ where $\kappa(\bar{w})$ isolates a type $p$ such that for a model $M \models T$ and $\bar{d} \in M$, if $tp(\bar{d}) = p$ then $\theta(M,\bar{d})$ is strongly minimal.
\end{fact}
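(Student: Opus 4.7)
The plan is to exploit the existence of an atomic prime model for $T$, which is available because $\aleph_1$-categorical theories are totally transcendental. First, take the prime model $M_0$ of $T$ over $\emptyset$; this exists and is atomic by total transcendence. Since $T$ is totally transcendental, the infinite definable set $M_0$ contains a definable strongly minimal subset, so there is an $L$-formula $\theta(v,\bar{w})$ and a tuple $\bar{d}\in M_0$ such that $\theta(M_0,\bar{d})$ is strongly minimal. Atomicity of $M_0$ provides an $L$-formula $\kappa(\bar{w})$ isolating the type $p := \operatorname{tp}(\bar{d}/\emptyset)$; these $\kappa$ and $\theta$ will be our witnesses.

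It remains to verify that for any $M\models T$ and any $\bar{d}'\in M$ with $\operatorname{tp}(\bar{d}'/\emptyset)=p$, the set $\theta(M,\bar{d}')$ is strongly minimal. The key step is to express ``$\theta(v,\bar{w})$ defines a strongly minimal set'' as a partial type $\Sigma(\bar{w})$ over $\emptyset$. For this we invoke Fact \ref{finitecover}: for every $L$-formula $\psi(v,\bar{u})$ there is a uniform bound $N_\psi \in \omega$ such that in any $M\models T$ and any $\bar{a}$, $\psi(M,\bar{a})$ is either infinite or of size at most $N_\psi$. The partial type $\Sigma(\bar{w})$ then consists of one formula asserting that $\theta(v,\bar{w})$ is infinite (i.e.\ has at least $N_\theta+1$ elements), together with, for each $L$-formula $\varphi(v,\bar{y})$, the formula
\[
\forall \bar{y}\; \bigl[\, |\theta(v,\bar{w}) \wedge \varphi(v,\bar{y})| \leq N_\varphi \,\vee\, |\theta(v,\bar{w}) \wedge \neg\varphi(v,\bar{y})| \leq N_\varphi \,\bigr],
\]
where $N_\varphi$ is chosen to majorize the uniform bounds for both $\theta \wedge \varphi$ and $\theta \wedge \neg\varphi$. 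Each such universally-quantified bounded-size statement is first-order in $\bar{w}$.

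Because $\theta(M_0,\bar{d})$ is strongly minimal, $\bar{d}$ satisfies every formula of $\Sigma$, so $\Sigma\subseteq p$. Since $\kappa$ isolates $p$, $\kappa$ entails each formula in $\Sigma$. Thus any $\bar{d}'$ realizing $\kappa$ in any model of $T$ also realizes $\Sigma$, and consequently $\theta(M,\bar{d}')$ is strongly minimal, as required. The main obstacle here is articulating strong minimality of $\theta(v,\bar{w})$ as a type-definable condition on $\bar{w}$; the uniform finite bounds supplied by Fact \ref{finitecover} are exactly what is needed to do so, after which the conclusion is a routine consequence of the isolation of $p$ by $\kappa$.
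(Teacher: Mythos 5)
Your proposal is correct. The paper does not include a proof of this fact, instead citing it as a consequence of \cite[6.1.16]{marker}, which is precisely the assertion that over any model of an $\omega$-stable theory there is a strongly minimal formula. Your argument supplies the expected derivation: take such a $\theta(v,\bar d)$ over the atomic prime model $M_0$ (exists since uncountably categorical theories are totally transcendental), isolate $p=\operatorname{tp}(\bar d/\emptyset)$ by $\kappa$ using atomicity, and then observe via Fact \ref{finitecover} that ``$\theta(v,\bar w)$ is strongly minimal'' is expressible as a partial type $\Sigma(\bar w)$ over $\emptyset$, so $\Sigma\subseteq p$ and $\kappa\vdash\Sigma$. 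One small point worth making explicit: since $p$ is isolated, it is realized in every model of $T$, which is what justifies the phrase ``every model of $T$ contains'' in the statement. With that noted, the argument is complete and matches the standard route.
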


We shall need the following corollary of Proposition \ref{rmec}.

\begin{corollary}
\label{rmeccor}
Assume that $(M_\lambda : \lambda \in \Lambda)$ is a zero-one class with uncountably categorical limit theory $T$. Let $\kappa(\bar{w})$ and $\theta(v,\bar{w})$ be as in Fact \ref{isosm}. Then in the conclusion  of Proposition \ref{rmec}, the tuple $\bar{d}_\lambda$ may be taken to be any tuple in $\kappa(M_{\lambda}^{|\bar{w}|})$, provided that this set is nonempty, which is the case in cofinitely many $M_\lambda$.
\end{corollary}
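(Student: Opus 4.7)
The plan is to follow the structure of the proof of Proposition \ref{rmec}, replacing its central $L^+$-existence sentence $\exists\bar{w}\,\Psi(\bar{w})$ by the stronger sentence $\forall\bar{w}\,(\kappa(\bar{w})\to\Psi(\bar{w}))$. First I would apply Theorem \ref{main} to the formulas $\varphi(\bar{x},\bar{y})$ and $\theta(v,\bar{w})$ from Fact \ref{isosm} to obtain polynomials $F_1,\ldots,F_r\in\mathbb{Q}[X]$ and formulas $\pi_1(\bar{y},\bar{w}),\ldots,\pi_r(\bar{y},\bar{w})$, and let $\Psi(\bar{w})$ be the $L^+$-formula used in the proof of Proposition \ref{rmec}: $\Psi(\bar{d})$ asserts that the $\pi_i(\bar{y},\bar{d})$'s partition $M^{|\bar{y}|}$ and that $|\varphi(M^{|\bar{x}|},\bar{b})|=F_i(|\theta(M,\bar{d})|)$ if and only if $M\models\pi_i(\bar{b},\bar{d})$. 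These are exactly the polynomials and formulas delivered by Proposition \ref{rmec}, and the goal is to show that for cofinitely many $\lambda$ they already witness the counting conclusion with \emph{every} choice of $\bar{d}_\lambda\in\kappa(M_\lambda^{|\bar{w}|})$.

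Next I would establish two cofinitely-often facts. First, because $\kappa$ isolates a complete type of $T$, the $L$-sentence $\exists\bar{w}\,\kappa(\bar{w})$ lies in $T$, so the zero-one law gives $\kappa(M_\lambda^{|\bar{w}|})\neq\emptyset$ for all but finitely many $\lambda$. Second, the $L^+$-sentence $\forall\bar{w}\,(\kappa(\bar{w})\to\Psi(\bar{w}))$ holds in all but finitely many expansions $M_\lambda^+$. For the second, I would reuse the ultrafilter-extension trick from the proof of Proposition \ref{rmec}: if this sentence failed on an infinite set $A\subseteq\Lambda$, extend the Fréchet filter together with $A$ to a nonprincipal ultrafilter $\mathcal{U}$, so that $M^+=\prod_{\lambda\to\mathcal{U}} M_\lambda^+$ fails the sentence, i.e.\ some $\bar{d}\in M$ satisfies $M\models\kappa(\bar{d})$ but $M^+\not\models\Psi(\bar{d})$. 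However $M\models T$ by the zero-one hypothesis, so by Fact \ref{isosm} the set $\theta(M,\bar{d})$ is strongly minimal, and then Theorem \ref{main} (whose conclusion for $\varphi$ and $\theta$ is precisely what $\Psi(\bar{d})$ encodes) forces $M^+\models\Psi(\bar{d})$, a contradiction.

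Combining the two facts, for all but finitely many $\lambda$ the set $\kappa(M_\lambda^{|\bar{w}|})$ is nonempty and $M_\lambda^+\models\Psi(\bar{d}_\lambda)$ for \emph{every} $\bar{d}_\lambda\in\kappa(M_\lambda^{|\bar{w}|})$, which is exactly the counting conclusion of Proposition \ref{rmec} for that choice of parameter tuple. The only subtle point is the ultrafilter-extension step in the previous paragraph, which lifts the zero-one law (stated in the paper for $L$-sentences) to the relevant $L^+$-sentence; this is routine because each $L^+$-ultraproduct of the $M_\lambda^+$'s is by construction the $L^+$-expansion of the corresponding $L$-ultraproduct of the $M_\lambda$'s, and the uniform bound produced by Theorem \ref{main} therefore pins down the $L^+$-truth value of $\Psi(\bar{d})$ in that expansion whenever $\kappa(\bar{d})$ holds.
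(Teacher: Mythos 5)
Your argument follows the same basic plan as the paper -- modify $\Psi$ using $\kappa$ and invoke the zero-one law -- but in one respect it is more careful than the paper's own one-line proof. The paper says to conjoin $\kappa(\bar{w})$ to $\Psi(\bar{w})$ and then continues to work with the existential sentence $\exists\bar{w}\,\Psi(\bar{w})$; as literally written this only yields that \emph{some} tuple in $\kappa(M_\lambda^{|\bar{w}|})$ works, not every such tuple. You correctly replace the existential with the universal sentence $\forall\bar{w}\,(\kappa(\bar{w})\to\Psi(\bar{w}))$, which is exactly what is needed to justify the ``any tuple'' phrasing of the corollary, and which is also what the proof of Proposition \ref{ndim} subsequently relies on. Your ultrafilter-extension step lifting the zero-one law to this $L^+$-sentence, and your observation that $\exists\bar{w}\,\kappa(\bar{w})\in T$ gives nonemptiness of $\kappa(M_\lambda^{|\bar{w}|})$ cofinitely often, are both sound. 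The only thing you leave implicit that the paper mentions is the cosmetic ``encoding out'' of the finitely many structures where the conclusion might fail, so as to cover every member of the family rather than just cofinitely many; this is routine and does not affect the substance.
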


\begin{proof}
We slightly modify the proof of Proposition \ref{rmec}. We conjoin to $\Psi(\bar{w})$ the formula $\kappa(\bar{w})$. Then it remains true that all-but-finitely many of the finite expansions $M^{+}_\lambda$ satisfy $\exists \bar{w} \Psi(\bar{w})$, and we may explicitly encode out the finitely many counterexamples as in the proof of Proposition \ref{rmec}.
\end{proof}

For the counting clause of the definition of $N$-dimensional asymptotic class, we shall apply the following lemma about the asymptotics of the inverse function of a polynomial.

\begin{lemma}
\label{polynom}
Let $F(X) = a_n x^n + a_{n-1} x^{n-1} ... + a_1 x + a_0 \in \mathbb{R}[X]$, with $a_n > 0$. Let $C \in \mathbb{R}$ be such that the function $F(x)$ is increasing on $[C,\infty)$, and let $F^{-1} : [F(C),\infty) \to [C,\infty)$ be the inverse function. Then \[\lim_{x \to \infty} [(x / a_n)^{1/n} - F^{-1}(x)] = \frac{a_{n-1}}{n a_n}.\]

Furthermore, let $G(x) = b_m x^m + b_{m-1} x^{m-1} + \ldots + b_1 x + b_0$. Then \[(b_m/ a_n^{m/n}) x^{m/n} - G(F^{-1}(x)) = o(x^{m/n}).\]
\end{lemma}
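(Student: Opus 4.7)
The strategy is to reduce both claims to asymptotic estimates as $y \to \infty$ via the substitution $y = F^{-1}(x)$, so that $x = F(y)$ and both $x \to \infty$ and $y \to \infty$ are equivalent. With this substitution, the first claim becomes the assertion that
\[
\lim_{y \to \infty} \left[ (F(y)/a_n)^{1/n} - y \right] = \frac{a_{n-1}}{n a_n},
\]
and the second becomes an asymptotic estimate for $(b_m/a_n^{m/n}) F(y)^{m/n} - G(y)$ in terms of $y$ (translating back via $x \asymp y^n$).

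For the first claim, I would factor $y^n$ out of $F(y)$ to write
\[
(F(y)/a_n)^{1/n} = y \left(1 + \frac{a_{n-1}}{a_n} y^{-1} + \frac{a_{n-2}}{a_n} y^{-2} + \cdots + \frac{a_0}{a_n} y^{-n}\right)^{1/n}.
\]
Expanding $(1 + z)^{1/n}$ as $1 + z/n + O(z^2)$ with $z = (a_{n-1}/a_n) y^{-1} + O(y^{-2})$, so $z^2 = O(y^{-2})$, gives
\[
(F(y)/a_n)^{1/n} = y \left(1 + \frac{a_{n-1}}{n a_n} y^{-1} + O(y^{-2})\right) = y + \frac{a_{n-1}}{n a_n} + O(y^{-1}),
\]
from which the limit in the first statement follows immediately.

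For the second claim, the estimate just obtained gives $F^{-1}(x) = (x/a_n)^{1/n} - a_{n-1}/(n a_n) + o(1)$, and in particular $F^{-1}(x) = (x/a_n)^{1/n} + O(1)$. I would then expand $G(F^{-1}(x))$ term by term: the leading term contributes
\[
b_m \bigl((x/a_n)^{1/n} + O(1)\bigr)^m = b_m (x/a_n)^{m/n} + O\bigl(x^{(m-1)/n}\bigr) = (b_m / a_n^{m/n}) x^{m/n} + O\bigl(x^{(m-1)/n}\bigr),
\]
using the binomial expansion of $((x/a_n)^{1/n} + O(1))^m$, while each lower-order term $b_k (F^{-1}(x))^k$ with $k < m$ contributes $O(x^{k/n}) = O(x^{(m-1)/n})$. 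Subtracting shows $(b_m/a_n^{m/n}) x^{m/n} - G(F^{-1}(x)) = O(x^{(m-1)/n}) = o(x^{m/n})$, as required.

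There is no genuine obstacle here; the argument is purely a calculus exercise in expanding $F$ near infinity and inverting. The only minor care needed is to package the binomial remainder terms so the $O$-estimates are uniform in the relevant ranges (which is automatic since all expansions are around $\infty$ and the leading coefficient $a_n$ is positive), and to remember that $(x/a_n)^{1/n}$ and $F^{-1}(x)$ differ by a bounded quantity, so all lower-order powers are comparable up to $O(1)$ multiplicative factors.
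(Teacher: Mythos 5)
Your proof is correct, and it takes a genuinely different route to the first limit than the paper does. The paper establishes $\lim_{x\to\infty}\bigl[(x/a_n)^{1/n}-F^{-1}(x)\bigr]=a_{n-1}/(na_n)$ by a squeeze argument: it compares $F(x)$ with shifted monomials $a_n(x+\alpha)^n$ on either side, observes that these have explicit inverses $(x/a_n)^{1/n}-\alpha$, and then inverts the resulting inequalities (using that inverting reverses order-comparison of increasing functions). This is entirely elementary and avoids any expansion of fractional powers. You instead substitute $y=F^{-1}(x)$, factor out $y^n$, and apply the Taylor/binomial expansion $(1+z)^{1/n}=1+z/n+O(z^2)$; this is more direct and gives the asymptotic $F^{-1}(x)=(x/a_n)^{1/n}-\tfrac{a_{n-1}}{na_n}+o(1)$ in one step, at the cost of invoking the fractional binomial series. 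For the second claim, both proofs pivot on the consequence $F^{-1}(x)=(x/a_n)^{1/n}+O(1)$; the paper then routes the estimate through the intermediate quantity $G\bigl((x/a_n)^{1/n}\bigr)$ and bounds the resulting error by $G(z+B)-G(z-B)$ after a change of variables, whereas you expand $G(F^{-1}(x))$ directly term by term with the binomial theorem. Both are valid; yours is somewhat shorter and uses more standard asymptotic bookkeeping, while the paper's is a bit more self-contained (no appeal to the expansion of $(1+z)^{1/n}$). Neither has a gap.
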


\begin{proof}
We compare $F(X)$ to polynomials of the form \[a_n(x + \alpha)^n = a_n x^n + a_n \cdot n \alpha x^{n-1} + \ldots + a_n \cdot n \alpha^{n-1} x + a_n \alpha^n.\] Note that the inverse of $a_n(x + \alpha)^n$ is $(x / a_n)^{1/n} - \alpha$.

It is easily seen that $a_n(x+\alpha)^n > F(x)$ for sufficiently large $x$ if $a_n \cdot n \alpha > a_{n-1}$ -- that is, if $\alpha > a_{n-1} / (n a_n)$ -- and that $a_n(x+\beta)^n < F(x)$ for sufficiently large $x$ if $\beta < a_{n-1} / (n a_n)$. 

In general, if $f(x),g(x) :\mathbb{R} \to \mathbb{R}$ are increasing functions such that $f(x) > g(x)$ for all (sufficiently large) $x$, then $f^{-1}(x) < g^{-1}(x)$ for all (sufficiently large) $x$. Therefore if $\alpha > a_{n-1} / (n a_n)$ we have that $(x/a_n)^{1/n} - \alpha < F^{-1}(x)$ eventually, and if $\beta <  a_{n-1} / (n a_n)$ then $(x/a_n)^{1/n} - \beta > F^{-1}(x)$ eventually. Rearranging these inequalities, we obtain that $\beta < (x/a_n)^{1/n} - F^{-1}(x) < \alpha$ eventually for all $\alpha >  a_{n-1} / (n a_n) > \beta$, which proves that \[\lim_{x \to \infty} (x/a_n)^{1/n} - F^{-1}(x) = a_{n-1} / (n a_n).\]

To prove that \[(b_m/ a_n^{m/n}) x^{m/n} - G(F^{-1}(x)) = o(x^{m/n})\] it suffices to prove the asymptotic equations \[b_m(x/a_n)^{m/n} - G((x / a_n)^{1/n}) = o(x^{m/n})\] and \[G((x / a_n)^{1/n}) - G(F^{-1}(x))= o(x^{m/n}).\] For the first, rewrite \[\lim_{x \to \infty} \frac{b_m(x/a_n)^{m/n} - G((x / a_n)^{1/n})}{x^{m/n}}\] as \[\lim_{z \to \infty} \frac{b_m z^m - G(z)}{a_n^{m/n} z^m}\] after making the variable substitution $z = (x / a_n)^{1/n}$. The numerator is a polynomial of degree less than $m$, therefore the limit is 0 as required.

To verify the second asymptotic equation, we observe that the first part of this lemma implies the existence of a $B$ such that $F^{-1}(x) \in ((x/a_n)^{1/n} - B, (x/ a_n)^{1/n} + B)$ for all $x$. Then\[|G(F^{-1}(x)) - G((x/a_n)^{1/n})| < |G((x/a_n)^{1/n} + B) - G((x/a_n)^{1/n} - B)\] for sufficiently large $x$. Therefore it suffices to show \[\lim_{x \to \infty} \frac{{G((x/a_n)^{1/n} + B) - G((x/a_n)^{1/n} - B)}}{x^{m/n}} = 0\] which is equivalent to saying \[\lim_{z \to \infty} \frac{G(z + B) - G(z - B)}{z^m} = 0\] as in the previous calculation. The polynomial $G(z + B) - G(z - B)$ simplifies to a polynomial of degree less than $m$, and so the limit is correct, and the conclusion of the lemma is true.
\end{proof}

With this lemma, we are able to prove that any class of finite structures with a zero-one law (as defined earlier) and uncountably categorical limit theory is an $N$-dimensional asymptotic class, where $N$ is the Morley rank of the limit theory.

\begin{proposition}
\label{ndim}
Let $T$ be an uncountably categorical pseudofinite theory. Suppose $(M_\lambda : \lambda \in \Lambda)$ is a sequence of finite $L$-structures such that $\prod_{\lambda \to \mathcal{U}} M_\lambda \models T$ for every non-principal ultrafilter $\mathcal{U}$ on $\omega$. Then $(M_\lambda : \lambda \in \Lambda)$ is an $N$-dimensional asymptotic class, where $N$ is the Morley rank of $T$.
\end{proposition}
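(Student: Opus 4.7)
The plan is to combine Corollary \ref{rmeccor} with the second part of Lemma \ref{polynom}, translating the exact polynomial description of pseudofinite cardinalities in terms of $|\theta(M_\lambda,\bar{d}_\lambda)|$ into an asymptotic description in terms of $|M_\lambda|^{d/N}$.

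First I would apply Corollary \ref{rmeccor} to the formula $x = x$ in a single variable, producing a polynomial $\hat{F}(X) \in \mathbb{Q}[X]$ of degree $N = MR(T)$ (using the ``degree equals Morley rank'' clause of Theorem \ref{main}) such that, for cofinitely many $\lambda$ and any $\bar{d}_\lambda \in \kappa(M_\lambda^{|\bar{w}|})$, $|M_\lambda| = \hat{F}(|\theta(M_\lambda,\bar{d}_\lambda)|)$. I would then apply the same corollary to $\varphi(\bar{x},\bar{y})$ to obtain polynomials $F_1,\ldots,F_r \in \mathbb{Q}[X]$ and formulas $\pi_1(\bar{y},\bar{w}),\ldots,\pi_r(\bar{y},\bar{w})$ with the analogous behavior. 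For each $i$, set $d_i := \deg F_i$ and, writing $a_i$ and $\hat{a}$ for the leading coefficients of $F_i$ and $\hat{F}$ respectively, set $\mu_i := a_i / \hat{a}^{d_i/N}$. Applying the second statement of Lemma \ref{polynom} with $F = \hat{F}$ and $G = F_i$ then yields, for $\bar{b} \in \pi_i(M_\lambda^{|\bar{y}|},\bar{d}_\lambda)$,
\[|\varphi(M_\lambda^{|\bar{x}|},\bar{b})| - \mu_i |M_\lambda|^{d_i/N} = F_i(\hat{F}^{-1}(|M_\lambda|)) - \mu_i |M_\lambda|^{d_i/N} = o(|M_\lambda|^{d_i/N}),\]
which is precisely the cardinality condition of Definition \ref{ndimdef}.

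The asymptotic class definition additionally requires the partitioning formulas to be $\emptyset$-definable. I would take $\pi_i^{new}(\bar{y}) := \exists \bar{w}\,[\kappa(\bar{w}) \wedge \pi_i(\bar{y},\bar{w})]$. For these to partition $M_\lambda^{|\bar{y}|}$ one needs the set $\pi_i(M_\lambda^{|\bar{y}|},\bar{d}_\lambda)$ to be independent of the choice of $\bar{d}_\lambda \in \kappa(M_\lambda)$. In any pseudofinite ultraproduct $M^+ \models T$, the value $|\theta(M,\bar{d})|$ is the unique positive infinite element $z$ with $\hat{F}(z) = |M|$ (by Lemma \ref{polynomeq}.2), hence constant on $\kappa(M)$. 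This is an $L^+$-sentence that, by the usual zero-one-law transfer used in the proof of Proposition \ref{rmec}, holds in cofinitely many $M_\lambda^+$; on that cofinite tail the desired independence, and hence the partition property, holds.

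The main obstacle I anticipate is not the asymptotic calculation but accommodating the finitely many exceptional $\lambda$'s where some part of the above fails (for instance $\kappa(M_\lambda)$ is empty, or a coincidence at small $q_\lambda := |\theta(M_\lambda,\bar{d}_\lambda)|$ breaks the partition). These I would handle exactly as at the end of the proof of Proposition \ref{rmec}: enlarge the list by finitely many pairs $(k,0)$ encoding the specific cardinalities arising in those exceptional structures, and guard each original $\pi_i^{new}$ by a formula asserting that $|\varphi(M,\bar{b})|$ exceeds a sufficiently large uniform threshold. With this bookkeeping, the pairs $(\mu_i,d_i)$ and the formulas $\pi_i^{new}$ (together with the finitely many extra encoding pairs) witness that $(M_\lambda : \lambda \in \Lambda)$ is an $N$-dimensional asymptotic class.
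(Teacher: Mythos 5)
Your proposal is correct and follows essentially the same route as the paper: apply Corollary \ref{rmeccor} first to $x=x$ to express $|M_\lambda|$ as a polynomial in $|\theta(M_\lambda,\bar{d}_\lambda)|$, then to $\varphi$, compose through the inverse, and invoke Lemma \ref{polynom} for the asymptotics, with the final $\emptyset$-definable partition obtained via $\exists\bar{w}[\kappa(\bar{w})\wedge\pi_i(\bar{y},\bar{w})]$. One small point worth tightening: Corollary \ref{rmeccor} applied to $x=x$ a priori hands you a finite \emph{list} of polynomials, one of which applies to each $\bar d_\lambda\in\kappa(M_\lambda)$; to get the single $\hat F$ you assert, the paper observes that $p\vdash\pi_i(\bar w)$ for a unique $i$ (so $\kappa\rightarrow\pi_i$ holds cofinitely often). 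Your zero-one transfer giving constancy of $|\theta(M_\lambda,\bar d)|$ on $\kappa(M_\lambda)$, combined with Lemma \ref{polynomeq}.1 to identify the competing polynomials, also recovers this, but it should be said explicitly; otherwise the proof is sound.
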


\begin{proof}
 Let $\kappa(\bar{w})$, $\theta(v,\bar{w})$ and $p$ be as in Fact \ref{isosm}. 
 
First we show Definition \ref{ndimdef} holds for the formula $\theta(x,\bar{y}) \wedge \kappa(\bar{w})$ (with variables as in that definition partitioned as $x,\bar{y}\bar{w})$. We apply Corollary \ref{rmeccor} to the formula ``$x = x$'' to obtain polynomials \[F_1(X),\ldots,F_r(X) \in \mathbb{Q}[X]\] of the Morley rank of the formula (which is $N$) and formulas $\pi_1(\bar{w}),\ldots,\pi_r(\bar{w})$. Let $i$ be such that $p \vdash \pi_i(\bar{w})$. Then $M_\lambda \models (\kappa(\bar{w}) \rightarrow \pi_i(\bar{w}))$ for  cofinitely many $M_\lambda$. For such a $M_\lambda$, if $M_\lambda \models \kappa(\bar{d})$ then $|M_\lambda| = F_i(|\theta(M_\lambda,\bar{d})|)$. If $M_\lambda \models \kappa(\bar{d}')$ as well then $|M_\lambda| = F_i(|\theta(M_\lambda, \bar{d}')|)$. By injectivity of polynomials on a tail, this implies the existence of a $C$ such that for $|M_\lambda| > C$, if $\bar{d}, \bar{d}' \in \kappa(M_\lambda^{|\bar{w}|})$ then $|\theta(M_\lambda, \bar{d})| = |\theta(M_\lambda, \bar{d}')|$, and $|M_\lambda| = F_i(|\theta(M_\lambda, \bar{d})|)$. Let $F(X) = F_i(X) = a_N X^N + \ldots + a_1 X + a_0$. It follows from Lemma \ref{polynom} that \[\lim_{|M_\lambda| \to \infty} \frac{|\theta(M_\lambda, \bar{d})| - (1/a_N)^N |M_\lambda|^{1/N}}{|M_\lambda|^{1/N}} = 0\] if we take $\bar{d} \in \kappa(M_\lambda^{|\bar{w}|})$.

Now let $\varphi(x_1,\ldots,x_n,\bar{y})$ be an arbitrary $L$-formula. Apply Corollary \ref{rmeccor} to $\varphi(x_1,\ldots,x_n,\bar{y})$ to get polynomials $G_1(X),\ldots,G_s(X)$ and formulas $\pi_1(\bar{y},\bar{w}), \ldots,$ $ \pi_s(\bar{y},\bar{w})$ such that the conclusion of that proposition holds. Then if $\bar{b},\bar{d} \in M_\lambda$ with $|M_\lambda|$ sufficiently large and $M_\lambda \models \kappa(\bar{d}) \wedge \pi_i(\bar{b},\bar{d})$ then $|\varphi(M_\lambda^n,\bar{b})| = G_i(|\theta(M_\lambda,\bar{d})|) = G_i(F^{-1}(|M_\lambda|))$. For each $i = 1,\ldots,s$ let $G_i(X) = a_{i,N_i} X^{N_i} + \ldots + a_{i,1} X + a_{i,0}$. By Lemma \ref{polynom}, we have \[\lim_{x \to \infty} \frac{G_i(F^{-1}(x)) - (a_{i,N_i} / a_N^{1/N}) x^{N_i / N}}{x^{N_i / N}} = 0.\] Therefore we have \[\lim_{x \to \infty} \frac{|\varphi(M_\lambda^n, \bar{b})| - (a_{i,N_i} / a_N^{1/N}) |M_\lambda|^{N_i / N}}{|M_\lambda|^{N_i / N}} = 0\] for all $\bar{b},\bar{d} \in M_\lambda$ with $M_\lambda \models \pi_i(\bar{b},\bar{d}) \wedge \kappa(\bar{d})$. That is, \[|\varphi(M_\lambda^n,\bar{b})| - (a_{i,N_i} / a_N^{1/N}) |M_\lambda|^{N_i/N} = o(|M_\lambda|^{N_i/N}),\] whence the definition of $N$-dimensional asymptotic classes almost applies to the formula $\varphi(\bar{x},\bar{y})$, with $(\mu_i, d_i)$ being $((a_{i,N_i} / a_N^{1/N}), N_i)$ and defining formulas being ``$\exists \bar{w}[ \kappa(\bar{w}) \wedge \pi_i(\bar{y},\bar{w})]$.'' The only difference from the definition is that the defining formulas do not necessarily partition every $M_\lambda$. However they do partition all $M_\lambda$ with $|M_\lambda|$ sufficiently large. This is sufficient for the family to be an $N$-dimensional asymptotic class -- in general, an $N$-dimensional asymptotic class remains so after adding arbitrarily many structures of size at most $K$, for any fixed $K$.
\end{proof}

\end{document}